\documentclass[11pt]{article}
\usepackage{amsmath,amsthm,amsfonts,amssymb,amscd, amsxtra, mathrsfs,textcomp,verbatim,inputenc}
\usepackage{url}
\usepackage[margin=1.0in]{geometry}
\usepackage{color}
\usepackage{float,mathrsfs,threeparttable,color,soul,colortbl,hhline,rotating,array,multirow,makecell}
\usepackage{tikz}
\usepackage{tkz-euclide}

\usepackage{subcaption}

\definecolor{magenta1}{HTML}{FF00FF}
\definecolor{magenta2}{HTML}{D600AD}
\definecolor{magenta3}{HTML}{B8009B}
\definecolor{magenta4}{HTML}{9A007A}
\definecolor{magenta5}{HTML}{7D0063}
\definecolor{magenta6}{HTML}{64004D}
\definecolor{magenta7}{HTML}{550042}


\newtheorem{theorem}{Theorem}[section]
\newtheorem{lemma}{Lemma}[section]
\newtheorem{definition}{Definition}[section]
\newtheorem{corollary}{Corollary}[section]
\newtheorem{proposition}{Proposition}[section]
\newtheorem{example}{Example}

\newcommand{\R}{\mathbb{R}}
\DeclareMathOperator{\grad}{grad}

\DeclareMathOperator*{\Min}{Minimize}

\newtheorem{algorithm}{Algorithm}

\usepackage{todonotes}
\begin{document} 
\tikzset{
  my tip/.style={
    decoration={
      markings,
      mark=at position 1 with {\arrow[scale=.7]{#1}}
    },
    postaction=decorate
  }
}

\title{Constraint qualifications and strong global convergence properties of an augmented Lagrangian method on Riemannian manifolds}
\author{Roberto Andreani \thanks{Department of Applied Mathematics, University of Campinas, Campinas-SP, Brazil. Email: andreani@ime.unicamp.br}
\and
Kelvin R. Couto\thanks{Federal Institute of Goi\'as, Goi\^ania-GO, Brazil and Department of Applied Mathematics. University of S\~ao Paulo, S\~ao Paulo-SP, Brazil. Email: kelvin.couto@ifg.edu.br}
\and
Orizon P. Ferreira \thanks{Institute of Mathematics and Statistics, Federal University of Goi\'as. Goi\^ania-GO, Brazil. Email: orizon@ufg.br}
\and
Gabriel Haeser \thanks{Department of Applied Mathematics, University of S\~ao Paulo, S\~ao Paulo-SP, Brazil. Email: ghaeser@ime.usp.br}
}
\maketitle

\maketitle
\begin{abstract}
In the past years, augmented Lagrangian methods have been successfully applied to several classes of non-convex optimization problems, inspiring new developments in both theory and practice. In this paper we bring most of these recent developments from nonlinear programming to the context of optimization on Riemannian manifolds, including equality and inequality constraints. Many research have been conducted on optimization problems on manifolds, however only recently the treatment of the constrained case has been considered. In this paper we propose to bridge this gap with respect to the most recent developments in nonlinear programming. In particular, we formulate several well known constraint qualifications from the Euclidean context which are sufficient for guaranteeing global convergence of augmented Lagrangian methods, without requiring boundedness of the set of Lagrange multipliers. Convergence of the dual sequence can also be assured under a weak constraint qualification. The theory presented is based on so-called sequential optimality conditions, which is a powerful tool used in this context. The paper can also be read with the Euclidean context in mind, serving as a review of the most relevant constraint qualifications and global convergence theory of state-of-the-art augmented Lagrangian methods for nonlinear programming.
\end{abstract}

\noindent
{\bf Keywords:} constraint qualifications, global convergence, augmented Lagrangian methods, Riemannian manifolds.

\medskip
\noindent
{\bf AMS subject classification:}  49J52, 49M15, 65H10, 90C30.

\section{Introduction}
The problem of minimizing an objective function defined on a Riemannian manifold has received a lot of attention over the last twenty five  years. Several unconstrained algorithms on Euclidean spaces have been successfully adapted to this more general setting. These adaptations come from the fact that the Riemannian machinery, from a theoretical and practical point of view, allows treating  several  constrained optimization problems as unconstrained Riemannian problems.  It is worth noting that  the works on this subject involve more than merely a theoretical experiment in generalizing Euclidean space concepts to Riemannian manifolds, which is challenging in many different aspects. Unlike Euclidean spaces, Riemannian manifolds are nonlinear objects, making it challenging to develop a solid optimization theory in this setting. The most important thing to keep in mind is that these studies are important mainly because many problems are most effectively addressed from a point of view of Riemannian geometry.  In fact, many optimization problems have an underlying Riemannian geometric structure that can be efficiently exploited with the goal of designing more effective methods to solve them; some references on this subject include  \cite{AbsilBook2008,Boumal2022Book,Edelman1999}.

Although unconstrained Riemannian optimization is already somewhat well established, only a few works have appeared dealing with constrained Riemannian optimization (CRO) problems, that is, Riemannian optimization problems where equality and inequality constraints restrict the variables to a subset of the manifold itself. For instance,  \cite{Yang_Zhang_Song2014} extended to the Riemannian context the Karush/Kuhn-Tucker (KKT) conditions  and  second-order  optimality conditions under a strong assumption, while in \cite{BergmannHerzog2019} a very interesting intrinsic approach was presented for defining suitable KKT conditions. In \cite{Yamakawa_Sato2022} the Approximate-KKT (AKKT) sequential optimality condition was proposed to support the global convergence theory of an augmented Lagrangian method recently introduced in \cite{Liu_Boumal2020}.  In   \cite{Jiang2022} an exact penalty method  for special problems on Stiefel manifolds was presented,  some constraint qualifications and  the first- and second-order optimality conditions  to support the method  are discussed.  A manifold inexact augmented Lagrangian framework to solve a family of nonsmooth optimization problems on Riemannian submanifolds embedded in Euclidean space is proposed in \cite{Deng2022}. In \cite{Obara2022}, a Riemannian sequential quadratic optimization algorithm is proposed, which uses a line-search technique with an $\ell_1$-penalty function as an extension of the standard sequential quadratic optimization algorithm for constrained nonlinear optimization problems in Euclidean spaces. In \cite{IPriemann}, a Riemannian interior point algorithm is introduced.

It is worth mentioning that the theoretical tools needed to support constrained optimization methods on the Riemannian setting  are still under development.  In fact, only recently  in \cite{BergmannHerzog2019} a full theory of constraint qualifications and optimality conditions have been developed, where a definition of weak constraint qualifications (CQs) such as Guignard's CQ and Abadie's CQ have been given. Despite guaranteeing the existence of Lagrange multipliers, more robust applications of these conditions are not known so far, even in the Euclidean setting. This is not the case of stronger conditions such as the linear independence CQ (LICQ) and Mangasarian-Fromovitz CQ (MFCQ), which gives, respectively, uniqueness and compactness of the Lagrange multiplier set, together with boundedness of a typical sequence of approximate Lagrange multipliers generated by several primal-dual algorithms, guaranteeing global convergence to a stationary point. These results were discussed in the Riemannian setting in \cite{Yamakawa_Sato2022}. 

In this paper our goal is to introduce several intrinsic weaker CQs in the Riemannian context, such as the constant rank CQ (CRCQ \cite{Janin_Robert_1984}), the constant positive linear independence CQ (CPLD \cite{Qi_Wei2000}), and their relaxed variants (RCRCQ \cite{Minchenko_Stakhovski_2011} and RCPLD \cite{Andreani_Haeser_Schuverdt_Silva2012RCPLD}). RCPLD is the weakest of these four conditions introduced, however all of them have their own set of applications, which we mention later. With the exception of CRCQ and RCRCQ, which are independent of MFCQ, all CQs presented are strictly weaker than MFCQ. Thus, despite the fact that no such condition guarantees boundedness of the set of Lagrange multipliers at a solution, they are still able to guarantee global convergence of primal-dual algorithms to a stationary point. In particular, we show that all limit points of a safeguarded augmented Lagrangian algorithm will satisfy the KKT conditions under all proposed conditions. Finally, we present two other conditions, the constant rank of the subspace component CQ (CRSC \cite{Andreani_Haeser_Schuverdt_Silva_CRSC_2012}) and the quasinormality CQ (QN \cite{hestenes}), which we also show to be enough for proving the global convergence result we mentioned. Although we do not pursue these results in the Riemannian setting, CRSC is expected to be strictly weaker than RCPLD, while these conditions (CRSC and QN) are the weakest ones known in the Euclidean setting such that an Error Bound condition is satisfied. That is, locally, the distance to the feasible set can be measured by means of the norm of the constraint violation \cite{Andreani_Haeser_Schuverdt_Silva_CRSC_2012,eb2}, which should be an interesting result to be extended to the Riemannian setting. Throughout the text we review several results known in the Euclidean setting in order to serve as a guide for future extensions to Riemannian manifolds. We chose to present in the Riemannian setting an interesting characteristic of QN which is related to the global convergence of the augmented Lagrangian method; namely, under QN, the sequence of approximate Lagrange multipliers generated by the algorithm is bounded, guaranteeing primal-dual convergence even when the set of Lagrange multipliers is itself unbounded, a result that first appeared in \cite{gnep} in the Euclidean setting. In order to do this, we will need to define a stronger sequential optimality condition known as Positive-AKKT (PAKKT \cite{Andreani_Fazzio_Schuverdt_Secchin2019}). Finally, the machinery of sequential optimality conditions we introduce is relevant due to the fact that it is easy to extend the global convergence results we present to other algorithms. We want also to draw attention to the fact that all of the findings obtained in this study are also valid in Euclidean spaces, thus, this study may also be seen as a review of the recent developments in constraint qualifications and their connections with global convergence of algorithms in the Euclidean setting.

This paper is organized as follows.  Section~\ref{sec:aux} presents some definitions and  preliminary results   that are important throughout our study.  In Section~\ref{sec:prel}, we state  the CRO  problem  and  also recall  the KKT  and  AKKT conditions, together with the definitions of LICQ and MFCQ for CRO problems.  Section~\ref{sec:nStricCQ}   is devoted to introducing the new  CQs for the CRO problem, namely,  (R)CRCQ, (R)CPLD, and CRSC, where we  present several examples and the proof that these conditions are indeed CQs associated with the global convergence of the augmented Lagrangian method. In Section~\ref{eq:PAKKT}  we introduce the PAKKT condition and the quasinormality CQ, where we show that the Lagrange multiplier sequences generated by the augmented Lagrangian method is bounded under quasinormality. The last section contains some concluding remarks.
\subsection{Notations, terminology and basics results} \label{sec:aux}

In this section, we recall some notations and basic concepts of Riemannian manifolds used throughout the paper. They can be found in many books on Riemannian Geometry, see, for example, \cite{Lang1995,Sakai1996,Loring2011}.

Let  ${\cal M}$ be an $n$-dimensional smooth Riemannian manifold. Denote  the {\it tangent space} at a point $p$  by $T_p{\cal M}$,   the {\it tangent bundle}  by $T{\cal M}:= \bigcup_{p\in M}T_p{\cal M}$  and a {\it vector field} by a mapping   $X\colon {\cal M} \to T{\cal M}$ such that $X(p) \in T_p{\cal M}$. Assume also that ${\cal M}$ has a {\it Riemannian metric} denoted by  $\langle  \cdot,   \cdot \rangle$ and the corresponding {\it norm}  by $\|\cdot\|$.  For $f\colon U \to\mathbb{R}$ a   differentiable function with derivative $d f(\cdot)$, where $U$ is an open subset of the manifold ${\cal M}$,   the Riemannian metric induces the mapping   $f\mapsto  \grad f $  which  associates     its {\it gradient vector field} via the following  rule  $\langle \grad f(p),X(p)\rangle:=d f(p)X(p)$, for all $p\in U$.  The {\it length} of a piecewise smooth curve  $\gamma\colon[a,b]\rightarrow {\cal M}$ joining $p$ to $q$ in ${\cal M}$, i.e., $\gamma(a)= p$ and $\gamma(b)=q$ is denoted by  $\ell(\gamma)$.  The {\it Riemannian distance} between $p$ and $q$ is defined as
$
d(p,q) = \inf_{\gamma \in \Gamma_{p,q}} \ell(\gamma),
$
where $\Gamma_{p,q}$ is the set of all piecewise smooth curves in ${\cal M}$ joining points $p$ and $q$. This distance induces the original topology on ${\cal M}$, namely $({\cal M}, d)$ is a complete metric space and the bounded and closed subsets are compact. The {\it open} and {\it closed balls} of radius $r>0$, centered at $p$, are respectively defined by $B_{r}(p):=\left\{ q\in {\cal M}:~ d(p,q)<r\right\}$ and $ B_{r}[p]:=\left\{ q\in {\cal M} :~d(p,q)\leq r\right\}$.  Let $\gamma$ be a curve joining the points $p$ and $q$ in ${\cal M}$ and let $\nabla$ be the Levi-Civita connection associated to $({\cal M}, \langle \cdot, \cdot \rangle)$.  A vector field $Y$ along a smooth curve $\gamma$ in ${\cal M}$ is said to be {\it parallel}  when $\nabla_{\gamma^{\prime}} Y=0$. If $\gamma^{\prime}$ itself is parallel, we say that $\gamma$ is a {\it geodesic}.  A Riemannian manifold is {\it complete} if its geodesics $\gamma(t)$ are defined for any value of $t\in \mathbb{R}$.  From now on, {\it ${\cal M}$ denotes  an $n$-dimensional smooth and complete Riemannian manifold}. Owing to the completeness of the Riemannian manifold ${\cal M}$, the {\it exponential map} at $p$, $\exp_{p}\colon T_{p}{\cal M} \to {\cal M}$, can be given by $\exp_{p}v = \gamma(1)$, where $\gamma$ is the geodesic defined by its position $p$ and velocity $v$ at $p$ and $\gamma(t) = \exp_p(tv)$ for any value of $t$.  For $p\in {\cal M}$, the {\it injectivity radius} of ${\cal M}$ at $p$ is defined by
$$
 r_{p}:=\sup\{ r>0:~{\exp_{p}}{\vert_{B_{r}(0_{p})}} \mbox{ is a diffeomorphism} \},
 $$
where $B_{r}(0_{p}):=\lbrace  v\in T_{p}{\cal M}:~\|v\| <r\rbrace$ and  $0_{p}$ denotes the origin of $T_{p}{\cal M}$.  Hence, for $0<\delta<r_{p}$ and  $\exp_{p}(B_{\delta}(0_{ p})) = B_{\delta}({p})$,  the map  $\exp^{-1}_{p}\colon  B_{\delta}({p})  \to B_{\delta}(0_{ p})$  is a diffeomorphism. Moreover,  for all $p, q\in B_{\delta}({p})$, there exists a unique geodesic segment $\gamma$ joining  $p$ to $q$, which is given by $\gamma_{p q}(t)=\exp_{p}(t \exp^{-1}_{p} {q})$, for all $t\in [0, 1]$. Furthermore,  $d(q,p)\,=\|exp^{-1}_{p}q\|$ and the map  $ B_{\delta}({p})\ni q\mapsto \frac{1}{2}d({q}, p)^2$ is  $C^{\infty}$   and its gradient is given by 
\[
\grad \frac{1}{2}d({q}, p)^2=-\exp^{-1}_{p}{q},
\]
 see, for example, \cite[Proposition 4.8, p.108]{Sakai1996}.

Next we state some elementary facts on (positive-)linear dependence/independence of gradient vector fields, whose proofs are straightforward.
Let $h=(h_1,\dots,h_s)\colon {\cal M} \to {\mathbb R}^s$ and  $g=(g_1,\dots,g_m)\colon {\cal M} \to {\mathbb R}^m$ be continuously differentiable functions  on ${\cal M}$. Let us denote
\begin{equation} \label{eq:ap}
A(q, {\cal I}, {\cal J}):=\{\grad h_i(q) :~i\in{\cal I} \}\cup\{\grad g_j(q):~j \in {\cal J}\}, \qquad q\in {\cal M}, 
\end{equation} 
where ${\cal I} \subset \{1, \ldots, s\}$,  ${\cal J}\subset \{1, \ldots, m\}$ while $\{\grad h_i(q) :~i\in{\cal I} \}\cup\{\grad g_j(q):~j \in {\cal J}\}$ is a multiset, that is, repetition of the same element is allowed.

 \begin{definition}\label{d:positive-linearly-dependent}
 Let $V = \{v_1, \dots , v_s\}$ and $W = \{w_1, \dots , w_m\}$ be two finite multisets on $T_{p}{\cal M}$. The pair $(V,W)$ is said to be positive-linearly dependent if there exist $\alpha=(\alpha_1, \ldots, \alpha_s) \in \R^{s}$ and $\beta=(\beta_1, \ldots, \beta_m)\in \R^{m}_+$ such that $(\alpha, \beta)\neq 0$ and
\begin{equation*}
\sum_{i=1}^s\alpha_iv_i+ \sum_{j=1}^m\beta_jw_j=0.
\end{equation*}
Otherwise, $(V,W)$ is said to be positive-linearly independent. When clear from the context, we refer to $V\cup W$ instead of $(V,W)$.
\end{definition}

\begin{lemma} \label{lemma:LD}
Let $p\in {\cal M}$ and  assume that $A(p,{\cal I}, {\cal J} )$ is (positive-)linearly independent.  Then, there exists   $\epsilon>0$ such that $A(q,{\cal I}, {\cal J}) $ is also (positive-)linearly independent for all $q \in B_{\epsilon}(p)$. 
\end{lemma}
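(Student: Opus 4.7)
The plan is to argue by contradiction, reducing to a standard Euclidean continuity argument via parallel transport (or, equivalently, a local chart). Suppose the conclusion fails. Then there exist a sequence $q_k \to p$ and coefficients $(\alpha^k,\beta^k) \in \mathbb{R}^s \times \mathbb{R}^m_+$, with $(\alpha^k,\beta^k)\neq 0$ (and $\beta^k\geq 0$ in the positive case), such that
\[
\sum_{i\in{\cal I}} \alpha^k_i \grad h_i(q_k) + \sum_{j\in{\cal J}} \beta^k_j \grad g_j(q_k) = 0 \quad \text{in } T_{q_k}{\cal M}.
\]
After normalizing, we may assume $\|(\alpha^k,\beta^k)\|=1$, so by compactness of the unit sphere (intersected with the closed orthant in the $\beta$ components), a subsequence satisfies $(\alpha^k,\beta^k)\to (\alpha,\beta)$ with $\|(\alpha,\beta)\|=1$ and, in the positive case, $\beta\geq 0$.

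Next, I would transport the identity above to $T_p{\cal M}$. For $k$ large, $q_k$ lies in a totally normal neighborhood of $p$, so there is a unique minimizing geodesic joining $p$ to $q_k$, along which parallel transport $P_k\colon T_{q_k}{\cal M}\to T_p{\cal M}$ is a well-defined linear isometry. Applying $P_k$ to the displayed identity preserves it. The key fact is that parallel transport depends continuously on the endpoints: since $\grad h_i$ and $\grad g_j$ are continuous vector fields and $P_k$ converges (in a local trivialization) to the identity on $T_p{\cal M}$ as $q_k\to p$, one has $P_k(\grad h_i(q_k))\to \grad h_i(p)$ and $P_k(\grad g_j(q_k))\to \grad g_j(p)$. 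Passing to the limit yields
\[
\sum_{i\in{\cal I}} \alpha_i \grad h_i(p) + \sum_{j\in{\cal J}} \beta_j \grad g_j(p) = 0,
\]
with $(\alpha,\beta)\neq 0$ (and $\beta\geq 0$ in the positive case), contradicting (positive-)linear independence of $A(p,{\cal I},{\cal J})$.

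The only nontrivial point is the continuous dependence of parallel transport on the base point; this is standard (it follows from smooth dependence of solutions to the parallel-transport ODE on parameters), but if one prefers to avoid it, an equivalent route is to fix a smooth local frame $\{E_1,\dots,E_n\}$ of $T{\cal M}$ on a neighborhood of $p$ and write $\grad h_i(q)=\sum_\ell a^i_\ell(q) E_\ell(q)$, $\grad g_j(q)=\sum_\ell b^j_\ell(q) E_\ell(q)$ with continuous coefficients $a^i_\ell,b^j_\ell$. The relation in $T_{q_k}{\cal M}$ becomes a linear system in $\mathbb{R}^n$ whose matrix converges to the one at $p$, and the same compactness/limit argument applies. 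Either route makes the proof a direct consequence of the standard Euclidean fact that (positive-)linear independence of a finite family of vectors is an open condition under continuous perturbation.
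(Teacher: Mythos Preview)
Your argument is correct. The paper does not actually supply a proof of this lemma; it merely announces it as one of ``some elementary facts on (positive-)linear dependence/independence of gradient vector fields, whose proofs are straightforward.'' Your contradiction argument---normalize the coefficients, pass to a convergent subsequence on the (intersected) unit sphere, and transfer the identity to $T_p{\cal M}$ via parallel transport or a smooth local frame to obtain a nontrivial (positive-)linear dependence at $p$---is exactly the standard route one would take, and there is nothing further to compare.

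One cosmetic remark: the coefficient vectors should really be indexed by ${\cal I}$ and ${\cal J}$ rather than lying in $\mathbb{R}^s\times\mathbb{R}^m_+$, but this is harmless since the remaining components can be set to zero.
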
 

\begin{lemma}\label{cr:eqcrcq}
The following two conditions are equivalent:
\begin{enumerate}
\item[(i)] There exists $\epsilon >0$ such that  for all  ${\cal I}\subset \{1, \ldots, s\}$ and ${\cal J}\subset \{1, \ldots, m\}$,  whenever  the set  $A(p, {\cal I}, {\cal J})$ is linearly dependent, $A(q, {\cal I}, {\cal J})$ is also linearly dependent for all $q\in B_{\epsilon}(p)$.
\item[(ii)] There exists $\epsilon >0$ such that   for all ${\cal I}\subset \{1, \ldots, s\}$ and ${\cal J}\subset \{1, \ldots, m\}$ the rank of $A(q, {\cal I}, {\cal J})$ is constant for any $q\in B_{\epsilon}(p)$.
\end{enumerate}
\end{lemma}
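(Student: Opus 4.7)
The plan is to prove both implications by combining the finiteness of the index sets $\mathcal{I}, \mathcal{J}$ with the semicontinuity of rank provided by Lemma~\ref{lemma:LD}.

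For the direction (ii)$\Rightarrow$(i), I note that $A(p,\mathcal{I},\mathcal{J})$ is linearly dependent exactly when its rank is strictly less than $|\mathcal{I}|+|\mathcal{J}|$. If (ii) holds with some radius $\epsilon$, then for every $q\in B_\epsilon(p)$ the rank of $A(q,\mathcal{I},\mathcal{J})$ equals the rank at $p$, and hence is also strictly less than $|\mathcal{I}|+|\mathcal{J}|$ whenever $A(p,\mathcal{I},\mathcal{J})$ is linearly dependent. This is exactly the statement in (i).

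For the converse (i)$\Rightarrow$(ii), fix $\mathcal{I},\mathcal{J}$ and set $r:=\operatorname{rank} A(p,\mathcal{I},\mathcal{J})$. First I would extract a maximal linearly independent subfamily $A(p,\mathcal{I}',\mathcal{J}')\subset A(p,\mathcal{I},\mathcal{J})$ with $|\mathcal{I}'|+|\mathcal{J}'|=r$. By Lemma~\ref{lemma:LD} applied to $A(p,\mathcal{I}',\mathcal{J}')$, there is a radius on which this family remains linearly independent, giving the lower bound $\operatorname{rank} A(q,\mathcal{I}',\mathcal{J}')\geq r$ for $q$ close to $p$. For the matching upper bound, I would use (i): for every index $i\in\mathcal{I}\setminus\mathcal{I}'$, the set $A(p,\mathcal{I}'\cup\{i\},\mathcal{J}')$ is linearly dependent by maximality of $\mathcal{I}',\mathcal{J}'$, hence by (i) remains linearly dependent throughout $B_\epsilon(p)$; combined with the preserved independence of $A(q,\mathcal{I}',\mathcal{J}')$, this forces $\grad h_i(q)$ to lie in the span of $A(q,\mathcal{I}',\mathcal{J}')$. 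The same argument applies to each $j\in\mathcal{J}\setminus\mathcal{J}'$, showing that $A(q,\mathcal{I},\mathcal{J})$ and $A(q,\mathcal{I}',\mathcal{J}')$ span the same subspace and therefore $\operatorname{rank} A(q,\mathcal{I},\mathcal{J})=r$.

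The main obstacle (which is in fact mild) is bookkeeping the radii: Lemma~\ref{lemma:LD} a priori produces a different radius for each pair $(\mathcal{I}',\mathcal{J}')$. Since $\{1,\dots,s\}$ and $\{1,\dots,m\}$ are finite, the collection of pairs is finite and I can take the minimum of the finitely many radii together with the $\epsilon$ furnished by (i); this gives a single radius on which the rank is constant for all choices of $\mathcal{I},\mathcal{J}$ simultaneously, which is precisely (ii).
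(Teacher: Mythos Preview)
Your argument is correct. The paper does not actually supply a proof of Lemma~\ref{cr:eqcrcq}; it is listed among the ``elementary facts on (positive-)linear dependence/independence of gradient vector fields, whose proofs are straightforward,'' and what you wrote is precisely the intended straightforward verification via Lemma~\ref{lemma:LD} and finiteness of the index sets.
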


\begin{lemma}[Carath\'eodory's Lemma \cite{Andreani_Haeser_Schuverdt_Silva2012RCPLD}]\label{l:Caratheodory}
Let   $u_1, \ldots , u_s, v_1, \ldots , v_m$ be vectors in a  finite-dimensional vector space $V$ such that $\{u_1,\dots,u_s\}$ is linearly independent. Suppose $x \in V$ is such that there are real scalars $ \alpha_1, \ldots , \alpha_s, \beta_1, \ldots , \beta_m$, with $\beta_j \neq 0 $ for $j=1, \ldots , m$ and
\begin{equation*}
x= \sum_{i=1}^s\alpha_i u_i + \sum_{j=1}^m \beta_j v_j.
\end{equation*}
Then, there exist a subset ${\cal J} \subset \{1, \ldots , m\}$, and real scalars $ \bar{\alpha}_i, i=1,\dots,s$, and $\bar{\beta}_j\neq0, j\in{\cal J}$ such that
\begin{equation*}
x= \sum_{i=1}^m\bar{\alpha}_i u_i + \sum_{j \in {\cal J}} \bar{\beta}_j v_j,
\end{equation*}
$\bar{\beta}_j\beta_j>0$ for all $j\in{\cal J}$, and $\{u_i:i \in \{1,\dots,s\}\}\cup \{v_j :j\in {\cal J}\}$  is linearly independent.
\end{lemma}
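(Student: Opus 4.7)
The plan is to proceed by finite reduction on the number of nonzero coefficients on the $v_j$ side of the representation. At each step, either the current spanning set is already linearly independent (in which case we stop), or we exploit a nontrivial dependence to eliminate one $v_j$ coefficient while preserving both the identity for $x$ and the sign agreement with the original $\beta_j$.

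Concretely, if $\{u_1,\dots,u_s,v_1,\dots,v_m\}$ is linearly independent, take $\mathcal{J}:=\{1,\dots,m\}$, $\bar{\alpha}_i:=\alpha_i$, $\bar{\beta}_j:=\beta_j$ and we are done. Otherwise, choose a nontrivial relation $\sum_i\gamma_i u_i+\sum_j\delta_j v_j=0$. Since $\{u_1,\dots,u_s\}$ is independent, some $\delta_{j_0}\neq 0$; after possibly negating $(\gamma,\delta)$, we may arrange $\beta_{j_0}\delta_{j_0}>0$, so the set $S:=\{j:\beta_j\delta_j>0\}$ is nonempty. Set
\[
t^*:=\min_{j\in S}\frac{\beta_j}{\delta_j}>0,\qquad \tilde{\alpha}_i:=\alpha_i-t^*\gamma_i,\qquad \tilde{\beta}_j:=\beta_j-t^*\delta_j.
\]
Then $x=\sum_i\tilde{\alpha}_i u_i+\sum_j\tilde{\beta}_j v_j$, and the minimum is attained at some index in $S$, so at least one $\tilde{\beta}_j$ becomes zero. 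For every remaining nonzero $\tilde{\beta}_j$, the sign condition $\tilde{\beta}_j\beta_j>0$ holds: if $j\in S$, then $0<t^*\le\beta_j/\delta_j$ keeps $\tilde{\beta}_j$ on the same side of $0$ as $\beta_j$; if $j\notin S$ and $\delta_j\neq 0$, then $\beta_j\delta_j\le 0$, so $-t^*\delta_j$ shares the sign of $\beta_j$ and hence so does $\tilde{\beta}_j$; and if $\delta_j=0$ then trivially $\tilde{\beta}_j=\beta_j$.

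Dropping the indices where $\tilde{\beta}_j=0$ strictly reduces the count of nonzero $v_j$ coefficients, and the reduced system still satisfies all the hypotheses of the lemma with the new $\tilde{\alpha}$, $\tilde{\beta}$ in place of $\alpha$, $\beta$. Iterating this reduction at most $m$ times terminates when the remaining vectors form a linearly independent set, at which point the collected data $(\mathcal{J},\bar{\alpha},\bar{\beta})$ satisfies all three desired properties, the sign condition $\bar{\beta}_j\beta_j>0$ being preserved because it is maintained at each step. I expect the main obstacle to be the bookkeeping for the sign-preservation check on indices $j\notin S$: one has to notice that when $\beta_j\delta_j\le 0$, the correction $-t^*\delta_j$ either vanishes or actually pushes $\tilde{\beta}_j$ further from zero in the direction of $\beta_j$, which is what guarantees that no coefficient flips sign during the reduction.
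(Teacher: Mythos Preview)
The paper does not supply a proof of this lemma; it is stated with a citation to \cite{Andreani_Haeser_Schuverdt_Silva2012RCPLD} and used as a black box. Your argument is the standard Carath\'eodory-type elimination, and it is correct: the key points --- that some $\delta_j$ must be nonzero because the $u_i$ are independent, that $t^*>0$ is well-defined on $S$, that at least one coefficient is zeroed while the surviving ones keep the sign of the previous step, and that sign agreement with the original $\beta_j$ then follows by transitivity through the iterations --- are all handled. The only cosmetic remark is that in the iteration you should track sign agreement with the \emph{original} $\beta_j$, not just with the immediately preceding coefficient; you do acknowledge this at the end, and the transitivity is immediate since at each step the nonzero surviving $\tilde\beta_j$ has the same sign as its predecessor.
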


We end this section by  stating  some  standard notations in Euclidean spaces.  The set of all $m \times n$ matrices with real entries is denoted by $\R^{m \times n}$ and    $\R^m\equiv \R^{m\times 1}$.  For $M \in \R^{m\times n} $ the matrix $ M^{\top}  \in \R^{n\times m}$ is the  {\it transpose} of $M$. For all $x, y \in \mathbb{R}^m$, 
$\min\{x,y\}\in\R^m$ is the component-wise minimum of $x$ and $y$. We denote by $[y]_+$ the Euclidean projection of $y$ onto the non-negative orthant $\R^m_+$, while $\left\|y\right\|_2$ and $\left\| y \right\|_{\infty}$ denote its Euclidean and infinity norms, respectively.

\section{Preliminaries} \label{sec:prel}
In this paper we are interested in the following Constrained Riemannian Optimization (CRO) problem
\begin{equation}\label{PNL}
\begin{array}{l}
\displaystyle\Min_{q\in {\cal M}}f(q),\\
\mbox{subject~to~}h(q)=0, ~ g(q)\leq 0,
\end{array}
\end{equation}
where ${\cal M}$ is an $n$-dimensional smooth and complete Riemannian manifold, the functions  $f\colon {\cal M} \to {\mathbb R}$,  $h=(h_1,\dots,h_s)\colon {\cal M} \to {\mathbb R}^s$ and  $g=(g_1,\dots,g_m)\colon {\cal M} \to {\mathbb R}^m$
are continuously differentiable on ${\cal M}$. 
The feasible set $\Omega\subset{\cal M}$ of problem~\eqref{PNL} is defined by
\begin{equation} \label{eq:constset}
\Omega:=\{q\in {\cal M}:~h(q)=0, ~ g(q)\leq 0\},
\end{equation}
which is closed. For a given point $p\in \Omega$, let  ${\cal A}(p)$ be the set of indexes of active inequality constraints, that is,
\begin{equation} \label{eq:actset}
{\cal A}(q):=\left\{j\in \{1, \ldots, m\}:~g_j(q)= 0\right\}.
\end{equation}
We say that the Karush/Kuhn-Tucker (KKT) conditions are satisfied at $p\in\Omega$ when there exists so-called Lagrange multipliers $(\lambda, \mu) \in  {\mathbb R}^s\times {\mathbb R}^m_{+}$ such that the following two conditions hold:
 \begin{itemize} 
\item[(i)]$\grad L(p, \lambda, \mu)=0$,
\item[(ii)]$\mu_j=0$, for all $j\notin {\cal A}(p)$, 
\end{itemize}
where $L(\cdot, \lambda, \mu): {\cal M}\to \mathbb{R}$ is the {\it Lagrangian function} defined by
\begin{equation*} 
L(q, \lambda, \mu):=f(q)+\sum_{i=1}^s\lambda_ih_i(q)+ \sum_{j=1}^m\mu_jg_j(q).
\end{equation*}
For $p\in\Omega$, the {\it linearized/linearization cone} ${\cal L} (p)$ is defined as
\begin{equation*}
{\cal L} (p) := \big\{ v \in T_p{\cal M}:~ \left\langle \grad h_i(p) , v\right\rangle =0,~i=1, \ldots s;~ \left\langle \grad g_j(p) , v \right\rangle \leq 0, ~ j \in {\cal A}(p)   \big\},
\end{equation*}
and its polar is given by
\begin{equation}\label{eq:PolarConeLin}
{\cal L} (p)^{\circ}= \Big\{v \in  T_p{\cal M} :~ v= \sum_{i=1}^{s} \lambda_i \grad h_i (p) + \sum_{j \in {\cal A}(p)} \mu_j \grad g_j(p), ~\mu_j \geq 0,  \lambda_i\in {\mathbb R}\Big\}.
\end{equation}
A constraint qualification (CQ) is a condition that refers to the analytic description of the feasible set and that guarantees that every local minimizer is also a KKT point. In \cite{BergmannHerzog2019} it was shown that when $p$ is a local minimizer of \eqref{PNL} that satisfies Guignard's CQ, that is, ${\cal L} (p)^{\circ}={\cal T} (p)^{\circ}$, where ${\cal T} (p)$ is the Bouligand tangent cone of $\Omega$ at $p$, then the KKT conditions are satisfied at $p$. 


In~\cite{Udriste_1988} the convex inequality constrained problem is studied, under a Slater CQ, on a complete Riemannian manifold. In this case, the objective and inequality constraints are convex along geodesics and the feasible set is described by a finite collection of inequality constraints. In this context KKT conditions are formulated. In \cite{Yang_Zhang_Song2014} it was shown that when $p$ is a local solution of \eqref{PNL} and LICQ holds at $p$, that is, the set
$
\{\grad h_i(p):~i=1, \ldots , s \}\cup \{\grad g_j(p):~ j\in {\cal A}(p)\}
$
is linearly independent, then the KKT conditions are satisfied at $p$.

Without CQs, an approximate verison of the KKT conditions are known to be satisfied at local minimizers:

\begin{theorem} \label{def:AKKT}
Let $p\in\Omega$ be a local minimizer of \eqref{PNL}. Then $p$ is an {\it Approximate-KKT} (AKKT) point, that is, there exist sequences $(p^k)_{k\in {\mathbb N}}\subset {\cal M}$, $(\lambda^k)_{k\in {\mathbb N}}\subset {\mathbb R}^s$ and $(\mu^k)_{k\in {\mathbb N}}\subset {\mathbb R}_+^m$ such that 
\begin{enumerate}
\item[(i)] $\lim_{k\to \infty}p^k=p,$
\item[(ii)] $ \lim_{k\to \infty} \grad L(p^k, \lambda^k, \mu^k)=0,$
\item[(iii)] $ \mu^k_j=0$,  for all $j\notin {\cal A}(p)$ and sufficiently large $k$.
\end{enumerate}
\end{theorem}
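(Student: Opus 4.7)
The plan is to adapt the classical external-penalty argument from Euclidean nonlinear programming to the Riemannian setting, using the distance function $d(\cdot,p)^2$ as a localizing regularizer. The key observations we need are that closed balls in $\mathcal{M}$ are compact (by completeness, as noted in Section~\ref{sec:aux}), and that $q\mapsto \tfrac{1}{2}d(q,p)^2$ is smooth in a neighborhood of $p$ with a tractable gradient, namely $-\exp_q^{-1}(p)$.

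First, fix $\bar r > 0$ small enough that $\bar r < r_p$ (the injectivity radius at $p$) and $f(p)\leq f(q)$ for every $q\in\Omega\cap B_{\bar r}[p]$; the former ensures that $q\mapsto\tfrac{1}{2}d(q,p)^2$ is $C^\infty$ on $B_{\bar r}[p]$, and the latter is local minimality. For each $k\in\mathbb{N}$, consider the penalized function
\begin{equation*}
F_k(q):=f(q)+\frac{k}{2}\|h(q)\|_2^2+\frac{k}{2}\bigl\|[g(q)]_+\bigr\|_2^2+\frac{1}{2}d(q,p)^2,
\end{equation*}
which is continuously differentiable (the squared plus-function is $C^1$). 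Since $B_{\bar r}[p]$ is compact, $F_k$ attains its minimum on it at some point $p^k$. Evaluating at the feasible point $p$ yields $F_k(p^k)\leq F_k(p)=f(p)$, so the penalty terms and $d(p^k,p)^2$ are uniformly bounded in terms of $f(p)-\inf_{B_{\bar r}[p]}f$. Passing to a convergent subsequence $p^k\to\bar p\in B_{\bar r}[p]$, the bound $\tfrac{k}{2}(\|h(p^k)\|_2^2+\|[g(p^k)]_+\|_2^2)\leq f(p)-f(p^k)$ forces $\bar p\in\Omega$. Then $f(\bar p)+\tfrac{1}{2}d(\bar p,p)^2\leq f(p)$, and local minimality gives $f(\bar p)\geq f(p)$, hence $d(\bar p,p)=0$; so every accumulation point is $p$, i.e. $p^k\to p$.

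For $k$ large enough, $p^k$ lies in the interior of $B_{\bar r}[p]$, so the first-order condition $\grad F_k(p^k)=0$ holds. Using the chain rule together with the formula for the gradient of $\tfrac{1}{2}d(\cdot,p)^2$, this reads
\begin{equation*}
\grad f(p^k)+\sum_{i=1}^s \lambda_i^k\,\grad h_i(p^k)+\sum_{j=1}^m \mu_j^k\,\grad g_j(p^k)=\exp_{p^k}^{-1}(p),
\end{equation*}
where $\lambda_i^k:=k\,h_i(p^k)\in\mathbb{R}$ and $\mu_j^k:=k\,[g_j(p^k)]_+\geq 0$. Since $\|\exp_{p^k}^{-1}(p)\|=d(p^k,p)\to 0$, item (ii) of the AKKT definition holds. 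For (iii), if $j\notin\mathcal{A}(p)$ then $g_j(p)<0$, so by continuity $g_j(p^k)<0$ for all sufficiently large $k$, whence $\mu_j^k=0$. Item (i) is just $p^k\to p$, already established.

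The main technical obstacle, as compared to the Euclidean argument, is the absence of a global quadratic regularizer: the squared distance is smooth only within the injectivity radius, which is why we must restrict the minimization to $B_{\bar r}[p]$ and argue compactness to guarantee existence of $p^k$. Once this localization is done, completeness of $\mathcal{M}$ and the standard Riemannian identities for $\grad\tfrac{1}{2}d(\cdot,p)^2$ make the rest of the proof structurally identical to the Euclidean case.
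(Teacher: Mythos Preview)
Your proof is correct and follows essentially the same external-penalty-plus-regularizer strategy that the paper uses (the paper does not prove this theorem directly, citing \cite{Yamakawa_Sato2022}, but the identical argument appears in its proof of the stronger Theorem~\ref{T:PAKKTCondNec}, with your inline compactness/convergence arguments packaged there as Lemmas~\ref{le:leAux} and~\ref{le:leAuxs}). The only cosmetic difference is that the paper phrases the penalized problem with a general sequence $\rho_k\to+\infty$ rather than $\rho_k=k$, which is immaterial.
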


This result appeared in \cite{Yamakawa_Sato2022}, as an extension of the well known nonlinear programming version of this theorem \cite{AndreaniHaeserMatinez2011}. Any sequence $(p^k)_{k\in {\mathbb N}}\subset {\cal M}$ that satisfies $(i)$, $(ii)$ and $(iii)$ above is called a primal AKKT sequence for $p$ while the correspondent sequence $(\lambda^k,\mu^k)_{k\in\mathbb{N}}$ is its dual sequence. In the Euclidean setting, this notion has shown to be crucial in developing new constraint qualifications and expanding global convergence results of several algorithms in different contexts; for instance, nonlinear programming \cite{Andreani_Haeser_Schuverdt_Silva2012RCPLD,ccp}, Nash equilibrium problems \cite{gnep}, quasi-equilibrium problems \cite{qep}, multi-objective \cite{giorgi}, second-order cone programming \cite{seq-crcq-socp}, semidefinite programming \cite{AHV,seq-crcq-sdp,weaksparsecq}, Banach spaces \cite{Borgens}, equilibrium constraints \cite{mpecakkt}, cardinality constraints \cite{kanzowcard}, among several other applications and extensions. In \cite{Yamakawa_Sato2022} the following safeguarded augmented Lagrangian algorithm was defined and it was proved that its iterates are precisely AKKT sequences. In order to define it, we denote by $\mathcal{L}_\rho(\cdot,\lambda,\mu):{\cal M}\to\mathbb{R}$ the standard Powell-Hestenes-Rockafellar augmented Lagrangian function, defined by $$\mathcal{L}_\rho(q,\lambda,\mu):=f(q)+\frac{\rho}{2}\left(\left\|h(q)+\frac{\lambda}{\rho}\right\|^2+\left\|\left[g(q)+\frac{\mu}{\rho}\right]_+\right\|^2\right).$$

\begin{algorithm} {\bf  Safeguarded augmented Lagrangian algorithm} \label{Alg:LAA}
\begin{description}
	\item[ Step 0.] Take  $p^0 \in {\cal M}$,   $ \tau \in \left[0  \, , \, 1\right) $, $\gamma >1$, $ \lambda_{\min} < \lambda_{\max}$, $\mu_{\max} >0$, and $\rho_1>0$. Take also  $\bar{\lambda}^1 \in \left[\lambda_{\min}, \lambda_{\max}\right]^s$ and $\bar{\mu}^1 \in \left[0, \lambda_{\max}\right]^m$  initial Lagrange multipliers estimates, and  $\left(\epsilon_k\right)_{k\in {\mathbb N}}\ \subset  \R_{+}$  a sequence of tolerance parameters such that $\lim_{k\rightarrow \infty} \epsilon_k =0$.  Set  $k \leftarrow 1$.
	
	\item[ Step 1.] (Solve the subproblem) Compute (if possible) $p^k$ such that
\begin{equation}\label{step1 alg1}
\left\|\grad {\mathcal{L}}_{\rho_k} (p^k, \bar{\lambda}^k, \bar{\mu}^k) \right\| \leq \epsilon_k.
\end{equation}
If it is not possible, stop the execution of the algorithm, declaring failure.

	\item[ Step 2.] (Estimate new multipliers) Compute
\begin{equation}\label{step2 alg1}
\lambda^{k}=\bar{\lambda}^k + \rho_kh(p^k), \qquad \qquad  \mu^{k}=\left[ \bar{\mu}^k + \rho_k g(p^k)\right]_{+}.
\end{equation}

	\item[ Step 3.] (Update the penalty parameter) Define
\begin{equation}\label{eq:step3_Vk}
V^{k}=\frac{\mu^{k}-\bar{\mu}^k}{\rho_k}.
\end{equation}
If $k=1$ or
\begin{equation}\label{eq: step3-2 alg}
 \max\left\{\big\|h(p^{k})\big\|_2 \, , \, \big\|V^k\big\|_2\right\} \leq \tau   \max\left\{\big\|h(p^{k-1}) \big\|_2 \, , \, \big\|V^{k-1}\big\|_2\right\},
\end{equation}
choose $\rho_{k+1} = \rho_k$. Otherwise, define $\rho_{k+1}=\gamma \rho_k$.
	
	\item[ Step 4.]  (Update multipliers estimates) Compute $ \bar{\lambda}^{k+1} \in \left[ \lambda_{\min} \, , \, \lambda_{\max}\right]^m$ and  $ \bar{\mu}^{k+1} \in \left[ 0 \, , \, \mu_{\max}\right]^p$.
	
	\item[ Step 5.] (Begin  a new iteration) Set $k \leftarrow k+1$ and go to {\bf Step 1}.
\end{description}
\end{algorithm}

In the algorithm, $(\lambda^k,\mu^k)_{k\in {\mathbb N}}$ is the dual sequence associated with $(x^k)_{k\in {\mathbb N}}$, which may be unbounded, while the safeguarded dual sequence $(\bar{\lambda}^k,\bar{\mu}^k)_{k\in {\mathbb N}}$ is bounded and used for defining the subproblems. A standard choice is considering $(\bar{\lambda}^{k+1},\bar{\mu}^{k+1})$ as the projection of $(\lambda^k,\mu^k)$ onto the corresponding box. It was shown in \cite{Yamakawa_Sato2022} that any limit point of a sequence $(p^k)_{k\in\mathbb{N}}$ generated by Algorithm~\ref{Alg:LAA} is such that it is stationary for the problem of minimizing an infeasibility measure, namely
\begin{equation*}
\displaystyle \Min_{q\in {\cal M}} \frac{1}{2}\left\|h(q)\right\|_2^2 + \frac{1}{2}\left\|g(q)_{+}\right\|_2^2.
\end{equation*}
When the limit point is feasible, they showed that it is an AKKT point. The correspondent AKKT sequence is precisely the sequence $(p^k)_{k\in\mathbb{N}}$ of primal iterates, what can be attested by the dual sequences $(\lambda^k,\mu^k)_{k\in\mathbb{N}}$ generated by the algorithm.

Thus, in order to establish a standard global convergence result to Algorithm~\ref{Alg:LAA}, namely, by showing that its feasible limit points satisfy the KKT conditions, it is sufficient to consider any condition that guarantees that all AKKT points are in fact KKT points. Due to Theorem \ref{def:AKKT}, the said condition will necessarily be a CQ. Constraint qualifications with this additional propery are sometimes called {\it strict} CQs, and only the following ones have been stated in the Riemannian context:

\begin{definition} 
 Let $\Omega$  be  given by \eqref{eq:constset}, $p\in \Omega$ and  ${\cal A}(p)$ be given by~\eqref{eq:actset}.  
 The point $p$ is said to satisfy:
\begin{enumerate}
 \item[(i)]  the linear independence constraint qualification (LICQ) if 
$$
\{\grad h_i(p):~i=1, \ldots , s \}\cup \{\grad g_j(p):~ j\in {\cal A}(p)\}
$$
is linearly independent;
\item[(ii)] the Mangasarian-Fromovitz constraint qualification (MFCQ) if 
$$
\{\grad h_i(p):~i=1, \ldots , s \}\cup\{\grad g_j(p):~ j\in {\cal A}(p)\}
$$
is positive-linearly independent.\end{enumerate}
\end{definition}
The definition of LICQ was presented in \cite{Yang_Zhang_Song2014} while MFCQ was introduced in~\cite{BergmannHerzog2019}, where it was shown that LICQ implies MFCQ. In the next section we will introduce several new weaker CQs and we will prove that they can still be used for proving global convergence to a KKT point of algorithms that generate AKKT sequences such as Algorithm~\ref{Alg:LAA}.

\section{New strict constraint qualifications} \label{sec:nStricCQ}

We will say that a property ${\cal P}$ of the constraints defining the feasible set $\Omega$ of \eqref{PNL} at a given point $p\in\Omega$ is a {\it strict} CQ for the necessary optimality condition ${\cal S}$ if at a point $p\in\Omega$ that satisfies both ${\cal P}$ and ${\cal S}$, it is necessarily the case that $p$ satisfies the KKT conditions, according to the definition given in~\cite{Birgin_Martinez_2014_SCQ}. Thus, after we present and discuss our conditions, we shall prove that they are all strict CQs with respect to the sequential optimality condition AKKT from Theorem \ref{def:AKKT}. The first conditions we propose are the following:
\begin{definition} \label{def:NewCQS}
Let $\Omega$  be  given by \eqref{eq:constset}, $p\in \Omega$ and  ${\cal A}(p)$ be given by~\eqref{eq:actset}. The point  $p$ is said  to satisfy:
\begin{enumerate}
\item[(i)] the  constant rank constraint qualification (CRCQ) if there exists $\epsilon>0$ such that for all  ${\cal I}\subset \{1, \ldots, s\}$ and ${\cal J}\subset {\cal A}(p)$ the rank of $\{\grad h_i(q):~i\in {\cal I} \}\cup \{\grad g_j(q) : j\in {\cal J}\}$ is constant for all $q\in B_{\epsilon}(p)$; 
\item[(ii)] the  constant positive linear dependence condition (CPLD), if  for any  ${\cal I}\subset \{1, \ldots, s\}$ and ${\cal J}\subset {\cal A}(p)$,  whenever  the set 
$\{\grad h_i(p):~i\in {\cal I} \}\cup \{\grad g_j(p) : j\in {\cal J}\}$  is positive-linearly dependent, there exists  $\epsilon >0$ such that  $\{\grad h_i(q):~i\in {\cal I} \}\cup \{\grad g_j(q):~ j\in {\cal J}\}$ is linearly dependent,  for all $q\in B_{\epsilon}(p)$;
\item[(iii)] the  Relaxed-CRCQ (RCRCQ)  if there exists $\epsilon >0$ such that   for all ${\cal J}\subset {\cal A}(p)$ the rank of $\{\grad h_i(q):~i=1, \ldots , s \}\cup \{\grad g_j(q):~ j\in {\cal J}\}$ is constant for all $q\in B_{\epsilon}(p)$;
\item[(iv)] the Relaxed-CPLD (RCPLD) if there exists $\epsilon >0$ such that the following two conditions hold:
\begin{enumerate}
      \item[(a)] the rank of $\{\grad h_i(q):~i=1, \ldots , s\}$ is constant for all $q\in B_{\epsilon}(p)$;
      \item[(b)] Let  ${\cal K}  \subset \{1, \ldots , s\}$ be such that $\{\grad h_i(p):~ i \in {\cal K}  \}$ is  a basis for the subspace generated by $\{\grad h_i(p):~ i=1, \ldots, s \}$. For all ${\cal J} \subset {\cal A}(p)$, if $\{\grad h_i(p):~i\in {\cal K}\}\cup \{\grad g_j(p):~ j\in {\cal J}\}$ is   positive-linearly dependent, then $\{\grad h_i(q):~i\in {\cal K}\}\cup \{\grad g_j(q):~ j\in {\cal J}\}$ is   linearly dependent, for all $q\in B_{\epsilon}(p)$.
\end{enumerate}
\end{enumerate}
\end{definition}
These conditions are natural versions in the Riemannian setting of the existing conditions in the Euclidean setting. The following diagram shows the relationship among the conditions introduced so far, where an arrow represents strict implication, which shall be proved later in this section.

\begin{figure}[!htb] 
	\centering
\begin{tikzpicture}
		\tikzset{
		    old/.style={rectangle, very thick, minimum size=0.7cm,fill=black!30,draw=black},
		    new/.style={rectangle,very thick, minimum size=0.7cm,fill=black!5,draw=black},
		    v/.style={-stealth,very thick,black!80!black},
		    u/.style={-stealth,dashed,very thick,blue!80!black},
		}
		\node[old] (LICQ) at (0,0) {LICQ};
		\node[new] (CRCQ) at (3.5,0) {CRCQ};
		\node[new] (RCRCQ) at (7,0) {RCRCQ};	
		\node[old] (MFCQ) at (0,-2) {MFCQ};
		\node[new] (CPLD) at (3.5,-2){CPLD};
		\node[new] (RCPLD) at (7,-2) {RCPLD};
		\draw[v] (LICQ) -- (CRCQ);
		\draw[v] (LICQ) -- (MFCQ);
		\draw[v] (MFCQ) -- (CPLD);
		\draw[v] (CRCQ) -- (RCRCQ);
		\draw[v] (CRCQ) -- (CPLD);		
		\draw[v] (RCRCQ) --(RCPLD);
		\draw[v] (CPLD) -- (RCPLD);

	\end{tikzpicture}
	\caption{Strict constraint qualifications for problem~\eqref{PNL}.}
	\label{relations}
\end{figure}
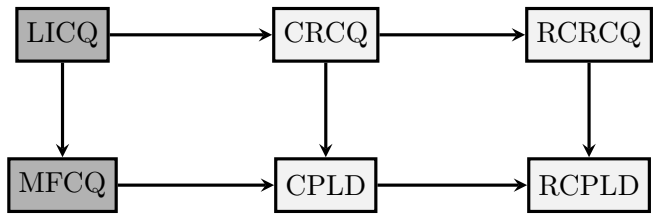

The reason for introducing these CQs in the Riemannian setting is due to their several applications known in the Euclidean case, which we expect to be extended also to the Riemannian setting. Although we shall only prove results concerning the global convergence of the safeguardaded augmented Lagrangian method, let us briefly review some properties of these conditions in the Euclidean setting.

LICQ is equivalent to the uniqueness of the Lagrange multiplier for any objective function that assumes a constrained minimum at the point \cite{wachsmuth}. However, it is considered to be too stringent. For instance, it fails when the same constraint is repeated twice in the problem formulation. On the other hand, MFCQ and its many equivalent statements is the most prevalent CQ in the nonlinear programming literature, with several applications. In particular, it considers the correct sign of the Lagrange multiplier in its formulation, what can be though as it being a more adequate statement than LICQ. However, the simple trick of replacing an inequality constraint $h(x)=0$ by two inequalities $h(x)\leq0$ and $-h(x)\leq0$ is enough for ensuring that MFCQ does not hold. This is due to the fact that under this very natural formulation, the set of Lagrange multipliers (if non-empty) is necessarily unbounded, while MFCQ is equivalent to the boundedness of this set \cite{wachsmuth}. Notice also that the case of linear constraints is not automatically covered by any of these two assumptions, which generally requires a separate analysis when one is assuming MFCQ or LICQ.

Condition CRCQ, on the other hand, gives more freedom to someone modeling an optimization problem, given that it is not tricked by repetition of a constraint or by splitting an equality constraint in two inequalities. It also subsumes the linear case, disregarding a separate analysis. However, it does not consider the correct sign of Lagrange multipliers, being thus independent of MFCQ. The CPLD condition, on the other hand, corrects this issue, introducing the correct sign considering positive linear dependence instead of standard linear dependence (see the alternative definition of CRCQ as given by Lemma \ref{cr:eqcrcq}), being then strictly weaker than both MFCQ and CRCQ together. This condition has been used mainly for showing global convergence of algorithms, firstly for an SQP method, when it was introduced in \cite{Qi_Wei2000}, and it was popularized for being the basis for the global convergence theory of the popular ALGENCAN software \cite{Birgin_Martinez_2014_SCQ}. However, other applications have emerged such as in bilevel optimization \cite{bilevel1,bilevel2}, switching constraints \cite{switching}, exact penalty \cite{exactp}, among several others. On the other hand, CRCQ is more robust in terms of applications, since it has been introduced in order to compute the derivative of the value function \cite{Janin_Robert_1984}. It has also found applications in the characterization of tilt stable minimizers \cite{gfrerer}. More interestingly, while MFCQ is still able to provide a second-order necessary optimality condition, the condition depends on the whole set of Lagrange multipliers, which limits its practical use. CRCQ, on the other hand, provides a strong second-order necessary optimality condition depending on a single Lagrange multiplier \cite{Andreani_Haeser_Schuverdt_Silva2012RCPLD}, a result which was recently extended to the context of conic optimization \cite{CRCQfaces}. This difference with respect to MFCQ in terms of the second-order results can somehow be explained by the fact that under CRCQ the value of the quadratic form defined by the Hessian of the Lagrangian evaluated at a direction in the critical cone is invariant to the Lagrange multiplier used to define it \cite{gfrerer}.

The relaxed versions of CRCQ \cite{Minchenko_Stakhovski_2011} and CPLD \cite{Andreani_Haeser_Schuverdt_Silva2012RCPLD} can in fact be thought as the ``correct'' versions of these conditions, as they enjoy the very same properties previously described. In fact, there is no reason to consider all subsets of equality constraints, and this was already present in the very first results proved under CRCQ by Janin in \cite{Janin_Robert_1984}.

Let us now prove that the strict implications shown in Figure \ref{relations} hold for any Riemannian manifold ${\cal M}$ with dimension $n\geq2$. We do this by providing several examples that help illustrate the different conditions we propose, however we only describe in details the computations concerning the most relevant examples; the other ones being analogous. We start by showing in the next two examples that MFCQ and CRCQ are independent conditions, that is, in Example \ref{ex:CRNotMF}, CRCQ holds and MFCQ fails while in Example \ref{ex:MFNotCR}, MFCQ holds while CRCQ fails.

\begin{example} \label{ex:CRNotMF}
Consider problem~\eqref{PNL} with feasible set $\Omega:=\{q\in {\cal M}:~ h(q)\leq0, -h(q)\leq0\}$, where $h: {\cal M} \to {\mathbb R}$ is continuously differentiable on ${\cal M}$ with $\grad h(p)\neq0$ and $p\in\Omega$. Thus MFCQ fails at $p$ while CRCQ holds.
\end{example}

To proceed with the examples let us define some auxiliary functions.  Let ${\cal M}$ be an $n$-dimensional Riemannian manifold and  $p\in {\cal M}$.   Take  $0<{\bar \delta}<r_p$, where $r_p$ is the injectivity radius, such that   $B_{{\bar \delta}}(p)$ is a strongly convex  neighborhood, which exists by~\cite[Proposition~4.2]{doCarmo1992}. Let  $u, v\in T_{p}{\cal M}$ with $\|u\|=\|v\|=1$, $\langle v, u\rangle =0$, and define  the geodesics  $\gamma_u(t):=\exp_p(tu)$ and $\gamma_v(t):=\exp_p(tv)$.  Take also   $0<\delta< {\bar \delta}$ and define   $p_1:= \gamma_u(-\delta)$, $p_2:= \gamma_u(\delta)$,  and $p_3= \gamma_v(\delta)$.  Note that  $p_1, p_2, p_3 \in B_{{\bar \delta}}(p)$ with $p_1 \neq p_2$, $p_1\neq p_3$, and  $p_2\neq p_3$. Define the following auxiliary functions 
\begin{equation} 
\label{defphi}
\varphi_i(q)= \frac{1}{2} d(q, p_i)^2-\frac{1}{2}d(p, p_i)^2, \qquad i=1,2,3.
\end{equation}

\begin{example} \label{ex:MFNotCR}
 Define the  functions $g_1(q):=\varphi_1(q)$, $g_2(q):=-\varphi_2(q)$ and consider a feasible set $\Omega:=\{q\in {\cal M}:~  g(q)\leq 0\}$, where $g:= (g_1, g_2)$. One can see that CRCQ is not valid at $p\in\Omega$ while MFCQ holds.
\end{example}

In~\cite{Qi_Wei2000}, it was proved that CPLD is strictly weaker than both MFCQ and CRCQ together in the Euclidean setting. The next example shows that the same thing happens for any smooth Riemannian manifold ${\cal M}$ with dimension $n\geq2$. 

\begin{example} \label{ex:CPLDNotCRCQ_MFCQ}
Let ${\cal M}$ be an $n$-dimensional  Riemannian manifold   with $n\geq  2$. Take  $p\in {\cal M}$ and   $g:=(g_1, \ldots , g_4)\colon{\cal M}\to {\mathbb R}^4$  continuously differentiable functions satisfying the following conditions
\begin{enumerate}
\item[(i)]  $g(p)=0$;
\item[(ii)] $\grad g_1(p)= \grad g_2(p)\neq0$ and  $\grad g_3(p)= - \grad g_4(p)\neq0$;
\item[(iii)] for all $\epsilon >0$, there exists  $q \in B_{\epsilon} (p)$ such that $\{\grad g_1({q}), \grad g_2({q})\}$ is linearly independent with  $q \neq p$;
\item[(iv)] $\{\grad g_1(p), \grad g_3(p)\}$ is linearly independent.
\item[(v)] there exists $\epsilon>0$ such that $\{\grad g_3(q), \grad g_4(q)\}$ is linearly dependent, for all $q \in B_{\epsilon} (p)$.
\end{enumerate}
Consider the feasible set $\Omega:=\{q\in {\cal M}:~  g(q)\leq 0\}$. Then, by $(i)$, $p \in \Omega$. It  follows from  the first equality in condition $(ii)$, that the set $\{\grad g_1(p), \grad g_2(p)\}$ is linearly dependent. Thus, $(iii)$ implies that $p$ does not satisfy CRCQ.  Furthermore,  the second  equality in condition $(ii)$ guarantees that $p$ does not satisfy MFCQ. We will now show that $p$ satisfies CPLD. It is easy to see that the set $\{\grad g_j (p):~j \in {\cal J}\subset {\cal A}(p) \}$ is positive linearly dependent if, and only if $\{3,4\}\subset{\cal J}$. Therefore, by $(v)$ we concluded that $p$ satisfies CPLD.  In the following we build two concrete examples satisfying conditions $(i),(ii), (iii), (iv)$, and $(v)$. The first one considers ${\cal M}$ as the $2$-sphere while the second one considers an arbitrary $2$-dimensional manifold ${\cal M}$, however it is easy to generalize the examples to an arbitrary dimension $n\geq2$.
\begin{itemize} 
\item Consider  the sphere  ${\cal M}:=\{(x, y, z)\in {\mathbb R}^3:~x^2+y^2+z^2=1\}$ and take  $p:=(0,0,1)$. The functions   $g_1(x, y, z):=x$, $g_2(x, y, z):=x+y^2$, $g_3(x, y, z):=x+y$, $g_4(x, y, z):=-x-y$  satisfy conditions $(i)$, $(ii)$, $(iii)$, $(iv)$, and $(v)$. Indeed,  for $q=(x,y,z)\in B_\epsilon(p)$ we have $\grad g_1(q)=\Pi_{T_q{\cal M}}(1,0,0)$, $\grad g_2(q)=\Pi_{T_q{\cal M}}(1,2y,0)$, $\grad g_3(q)=\Pi_{T_q{\cal M}}(1,1,0)$, and $\grad g_4(q)=\Pi_{T_q{\cal M}}(-1,-1,0)$, where $\Pi_{T_q{\cal M}}$ denotes the orthogonal projection onto $T_q{\cal M}$. Cleary, $(i)$ holds. To see that $(ii)$ and $(iv)$ hold it is enough to note that since $T_p{\cal M}=\{p\}^{\perp}$, at $q=p$ the projections coincide with the vectors being projected.   We proceed to prove that $(iii)$ holds. Let $q:=(x,y,z)$ with $y\neq0$ and $z\neq0$, hence $u:=(1,0,0)$, $v:=(1,2y,0)$, and $q$ are linearly independent. Take $\alpha,\beta\in\R$ such that $\alpha\grad g_1(q)+\beta\grad g_2(q)=0$. Since  $T_q{\cal M}=\{q\}^{\perp}$, this implies that $\alpha(u-r_uq)+\beta(v-r_vq)=0$ for some $r_u,r_v\in\R$, which in turn gives $\alpha u+\beta v+(-\alpha r_u-\beta r_v)q=0$, implying $\alpha=\beta=0$, hence, $(iii)$. We obtain $(v)$ by noting that $\grad g_3(q)=-\grad g_4(q)$ for all $q$.

\item Let ${\cal M}$ be a $2$-dimensional complete manifold. Let us show that the functions $g_1(q):=\varphi_1(q)$, $g_2(q):=-\varphi_2(q)$, $g_3(q):=\varphi_3(q)$, and  $g_4(q):=-\varphi_3(q)$ satisfy conditions $(i)$, $(ii)$, $(iii)$, $(iv)$, and $(v)$, where these functions are defined in \eqref{defphi}. Indeed,  $g(p)=0$, which gives $(i)$. Since $\grad g_1(p)=-\exp^{-1}_pp_1=\delta u\neq0$, $\grad g_2(p)=\exp^{-1}_pp_2=\delta u$, $\grad g_3(p)=-\exp^{-1}_pp_3=-\delta v\neq0$ and $\grad g_4(p)=\exp^{-1}_pp_3=\delta v$, then $g$  satisfies $(ii)$.

We proceed to show that  $g_1$ and  $g_2$ satisfy  (iii). For that, take a point  $q\in B_{{\bar \delta}}(p)$ with   $q\neq p$ such that  $d({q},p_2)=d({q},p_1) < d(p_2,p_1)$.   Note that  $\grad g_1({q})=-\exp^{-1}_{q}p_1$ and $\grad g_2({q})=\exp^{-1}_{q}p_2$. In addition, due to $d({q},p_2)=d({q},p_1)$, we have $\|\grad g_1({q})\|=\|\grad g_2({q})\|=d({q},p_1)$.  Assume by contradiction that $\{\grad g_1({q}), \grad g_2({q})\}$ is linearly dependent. Since ${\cal M}$ is  $2$-dimensional and $d({q},p_1) < d(p_2,p_1) $, we conclude that $\grad g_1({q})=\grad g_2({q})$.  Consider the geodesic   
$$
\gamma(t)=\exp_{q}(-t\exp^{-1}_{q}p_1)= \exp_{q}(t\exp^{-1}_{q}p_2), 
$$
where the second equality holds because  we are under the assumption $\grad g_1({q})=\grad g_2({q})$. Hence $\gamma(0)=q$,  $\gamma(-1)=p_1$, and $\gamma(1)=p_2$. Considering that there exists a unique geodesic joining $p_1$ and $p_2$, we conclude that $\gamma_u=\gamma$. Thus, there exists ${\bar t}$ such that  $\gamma({\bar t})=p$ and 
$$
\gamma'({\bar t})=P_{q\gamma({\bar t})}(-\exp^{-1}_{q}p_1).
$$
We know that the parallel transport is an isometry and $\gamma'({\bar t})=\delta \gamma'_u(0)=-\exp^{-1}_pp_1$. Thus, using the last equality we conclude that $d(q, p_1)=d(p, p_1)$. Hence, considering that $q$, $p$ and $p_1$ belongs to the same geodesic,  we have  $q=p$, which is a contradiction. Therefore, $\{\grad g_1({q}), \grad g_2({q})\}$ is linearly independent  for   $q \neq p$.

Due to $\grad g_1(p)=-\delta u$, $\grad g_3(p)=\delta v$ and $\langle v, u\rangle =0$, condition $(iv)$ is satisfied. Finally,  due to $g_4=-g_3$ we have  $\grad g_4({q})=-\grad g_3({q})$ and    $(v)$ is also satisfied.

The situation in consideration is depicted in Figure~\ref{figura CPLD NotCRCQ_MFCQ}.
\end{itemize}

\end{example}

\begin{figure}[!htb]
\begin{subfigure}[b]{0.45\linewidth}
\centering
\begin{tikzpicture}[scale=0.4, transform shape]
\tkzInit[xmin=-10,xmax=10,ymin=-6,ymax=6];
\tkzDefPoint(-7,-1){A}
\tkzDefPoint(7,1){B}
\tkzDefPoint(3,-5){C}
\fill [magenta!10!white] (-7,-1) to [out=50, in=140] (7,1) to [out=188, in=75] (3,-5) to [out=110, in=-7] (-7,-1);
\draw (-7,-1) to [out=50, in=140] (7,1) to [out=188, in=75] (3,-5) to [out=110, in=-7] (-7,-1);
\tkzDefPoint(-2.35,2.35){D}
\tkzDefPoint(-5.6,0){E}
\tkzDefPoint(4,-0.99){F}
\tkzDefPoint(6,1.6){G}
\tkzDrawSegment(D,E)
\tkzDrawSegment(E,F)
\tkzDrawSegment(F,G)
\tkzDrawSegment(G,D)
\tkzDrawPolygon[fill=cyan!10](D,E,F,G);
\tkzDefPoint(-0.06,0.73){O}
\tkzDefPoint(4,3.5){I}
\tkzDefPoint(4,0.5){J}
\tkzDefPoint(4.1,0.687){L}
\tkzDefPoint(-6,-0.7){M}
\tkzLabelPoint[left, magenta!100!red](M){${\cal M}$}
\tkzDefPoint(-4.6,-0.1){S}
\tkzLabelPoint[above, cyan!100!blue, font=\fontsize{8}{5}\selectfont](S){$T_p{\cal M}$}
\tkzDefPoint(1.6,0.1){Q1}
\tkzDefPoint(-1.95,0.7){Q2}
\tkzDefPoint(0.2,1.7){Q3}
\node[right, magenta1] at (Q1) {$p_2$};
\node[left, magenta2] at (Q2) {$p_1$};	
%
\node[right, magenta3] at (Q3) {$p_3$};	
\draw[thick,magenta2, opacity=0.3, smooth, line width=0.5pt] [rotate=40] plot[domain=-161:168,variable=\t] ({1.8*cos(\t)-1}, {sin(\t)*1.9+1.8},{sin(\t)});
\draw[thick,magenta2, opacity=1, smooth, line width=0.5pt] [rotate=40] plot[domain=168:254,variable=\t] ({1.8*cos(\t)-1}, {sin(\t)*1.9+1.8},{sin(\t)});
\draw[thick,magenta1, opacity=0.3, smooth,  line width=0.5pt] [rotate=-20] plot[domain=-21:228.5,variable=\t] ({1.8*cos(\t)+1.5},{ sin(\t)*1.9+0.7},{sin(\t)});
\draw[thick,magenta1, opacity=1, line width=0.5pt] [rotate=-20] plot[domain=228.5:343,variable=\t] ({1.8*cos(\t)+1.5},{ sin(\t)*1.9+0.7},{sin(\t)});
\draw[thick, magenta3, rotate=20, smooth, opacity=1, line width=0.5pt] plot[domain=-9.3:122.5, variable=\t] ({1.3*cos(\t)+0.9},{ 1.5*sin(\t)+1.7},{sin(\t)+0.4});	
\draw[thick, magenta3, rotate=20, smooth, opacity=0.3, line width=0.5pt] plot[domain=122.5:355.7, variable=\t] ({1.3*cos(\t)+0.9},{ 1.5*sin(\t)+1.7},{sin(\t)+0.4});		
\draw[magenta!50!white] plot [smooth] coordinates {(Q1) (O) (Q2)}; 
\draw[magenta!50!white] plot [smooth] coordinates {(Q3) (0.12,1.5) (O) }; 
\draw[my tip=stealth, magenta1] (-0.06,0.73) -- (1.8,0.6) node [above right, magenta1] {$\grad g_1 (p) = \grad g_2 (p)$};
\draw[my tip=stealth, magenta3] (-0.06,0.73) -- (0.13,1.75) node[left , magenta3] {$\grad g_3 (p) $};
\draw[my tip=stealth, magenta3] (-0.06,0.73) -- ($(0.13,1.75)!2!(-0.06,0.73)$)node[right, magenta3] {$\grad g_4 (p) $};
\tkzLabelPoint[below left](O){$p$}
\tkzDrawPoint(O)
\node[circle, magenta1, fill, inner sep=0.8pt] (Q1) at (1.6,0.1){};
\node[circle, magenta2, fill, inner sep=0.8pt] (Q2) at (-1.95,0.7){};
\node[circle, magenta3, fill, inner sep=0.8pt] (Q3) at (0.2,1.7){};
\end{tikzpicture}
\caption{ The scenario at point $p$.}
\end{subfigure}
\hspace{0.5cm}
\begin{subfigure}[b]{0.45\linewidth}
\centering
\begin{tikzpicture}[scale=0.4, transform shape]
\tkzInit[xmin=-10,xmax=10,ymin=-6,ymax=6];
\tkzDefPoint(-7,-1){A}
\tkzDefPoint(7,1){B}
\tkzDefPoint(3,-5){C}
\fill [magenta!10!white] (-7,-1) to [out=50, in=140] (7,1) to [out=188, in=75] (3,-5) to [out=110, in=-7] (-7,-1);
\draw (-7,-1) to [out=50, in=140] (7,1) to [out=188, in=75] (3,-5) to [out=110, in=-7] (-7,-1);
\tkzDefPoint(-2.35,2.7){D}
\tkzDefPoint(-5.6,-0.55){E}
\tkzDefPoint(3.7,-1.55){F}
\tkzDefPoint(6,2.05){G}
\tkzDrawSegment(D,E)
\tkzDrawSegment(E,F)
\tkzDrawSegment(F,G)
\tkzDrawSegment(G,D)
\tkzDrawPolygon[fill=cyan!17](D,E,F,G); 
\tkzDefPoint(-0.06,0.73){O}
\tkzDefPoint(-0.09,0.53){Q}
\tkzDefPoint(4,3.5){I}
\tkzDefPoint(4,0.5){J}
\tkzDefPoint(4.1,0.687){L}
\tkzDefPoint(-6,-0.7){M}
\tkzLabelPoint[left, magenta!100!red](M){${\cal M}$}
\tkzDefPoint(-4.6,-0.6){S}
\tkzLabelPoint[above, cyan!100!blue, font=\fontsize{8}{5}\selectfont](S){$T_q{\cal M}$}
\node[circle, magenta1, fill, inner sep=0.7pt] (Q1) at (1.6,0.1){};
\node[below left, magenta1] at (Q1) {$p_2$};
\node[circle, magenta2, fill, inner sep=0.7pt] (Q2) at (-1.95,0.7){};
\node[left, magenta2] at (Q2) {$p_1$};	
\node[circle, magenta3, fill, inner sep=0.7pt] (Q3) at (0.25,1.75) {};
\node[below right, magenta3] at (Q3) {$p_3$};	
\draw[thick,magenta2, opacity=0.3, smooth, line width=0.3pt] [rotate=40] plot[domain=0:360,variable=\t] ({1.8*cos(\t)-1}, {sin(\t)*1.9+1.8},{sin(\t)});
\draw[thick,magenta4, opacity=0.3, smooth, dash pattern=on 0.2pt off 0.3pt, line width=0.3pt] [rotate=40] plot[domain=0:360,variable=\t] ({1.85*cos(\t)-1}, {sin(\t)*1.95+1.8},{sin(\t)});
\draw[thick,magenta1, opacity=0.3, smooth,  line width=0.3pt] [rotate=-20] plot[domain=-49.4:260.1,variable=\t] ({1.8*cos(\t)+1.5},{ sin(\t)*1.9+0.7},{sin(\t)});
\draw[thick,magenta1, opacity=1, line width=0.3pt] [rotate=-20] plot[domain=260.1:310.6,variable=\t] ({1.8*cos(\t)+1.5},{ sin(\t)*1.9+0.7},{sin(\t)});
\draw[thick,magenta4, opacity=0.3, smooth, dash pattern=on 0.2pt off 0.3pt,  line width=0.3pt] [rotate=-20] plot[domain=-46:257,variable=\t] ({1.85*cos(\t)+1.5},{ sin(\t)*1.95+0.7},{sin(\t)});
\draw[thick,magenta4, opacity=1, dash pattern=on 0.2pt off 0.3pt, line width=0.3pt] [rotate=-20] plot[domain=257:314,variable=\t] ({1.85*cos(\t)+1.5},{ sin(\t)*1.95+0.7},{sin(\t)});	
\draw[thick, magenta3, rotate=20, opacity=0.3, smooth, line width=0.3pt] plot[domain=96.1:379.5, variable=\t] ({1.3*cos(\t)+0.9},{ 1.5*sin(\t)+1.7},{sin(\t)+0.4});
\draw[thick, magenta3, rotate=20, opacity=1, smooth, line width=0.3pt] plot[domain=19.5:96.1, variable=\t] ({1.3*cos(\t)+0.9},{ 1.5*sin(\t)+1.7},{sin(\t)+0.4});	
\draw[thick, magenta3, rotate=20, opacity=0.3, smooth, dash pattern=on 0.2pt off 0.3pt, line width=0.3pt] plot[domain=108.5:370, variable=\t] ({1.5*cos(\t)+0.9},{ 1.7*sin(\t)+1.7},{sin(\t)+0.4});	
\draw[thick, magenta3, rotate=20, opacity=1, smooth, dash pattern=on 0.2pt off 0.3pt, line width=0.3pt] plot[domain=10:108.5, variable=\t] ({1.5*cos(\t)+0.9},{ 1.7*sin(\t)+1.7},{sin(\t)+0.4});	
\draw[my tip=stealth, magenta2] (-0.09,0.53) -- (1.8,0.7) node [right, magenta2] {$\grad g_2 (q)$};
\draw[my tip=stealth, magenta1] (-0.09,0.53) -- (1.8,0.3) node [right, magenta1] {$\grad g_1 (q) $};
\draw[my tip=stealth, magenta3] (-0.09,0.53) --  (0.15,1.8) node[left, magenta3] {$\grad g_3 (q) $};
\draw[my tip=stealth, magenta3] (-0.09,0.53) -- ($(0.15,1.8)!2!(-0.09,0.53)$)  node[left, magenta3] {$\grad g_4 (q) $};
\tkzLabelPoint[above left](O){$p$}
\tkzDrawPoint(O)
\tkzLabelPoint[below left](Q){${q}$}
\tkzDrawPoint(Q)

\end{tikzpicture}
\caption{Case for $q$ in the neighborhood of $p$.}
\end{subfigure}

	\caption{Illustrative figure for Example~\ref{ex:CPLDNotCRCQ_MFCQ} where CPLD holds while MFCQ and CRCQ fails. The rank of the gradients indexed by $\{1,2\}$ is not constant, but they are positive-linearly independent. In addition, the gradients in $\{3,4\}$ are positive-linearly dependent, which implies that MFCQ fails, but this remains to be the case in a neighborhood.}
\label{figura CPLD NotCRCQ_MFCQ} 
\end{figure}
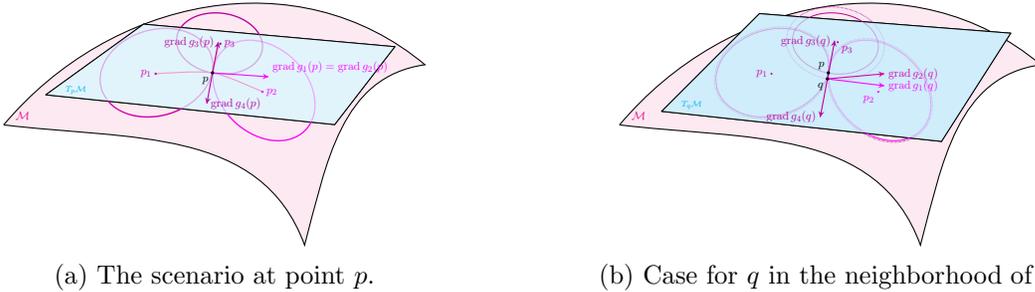


As proved in~\cite{Minchenko_Stakhovski_2011}, RCRCQ is strictly weaker than CRCQ in the Euclidean context. This fact is also true for any smooth Riemannian manifold ${\cal M}$ with dimension $n\geq2$. In fact, the following example shows that RCRCQ does not imply CPLD.

\begin{example} \label{ex:CPLDNotRCRCQ}
  Define the functions $h(q):=-\varphi_3(q)$, $g_1(q):=-\varphi_1(q)$,  $g_2(q):=-\varphi_2(q)$ from \eqref{defphi} and consider a feasible set $\Omega:=\{q\in {\cal M}:~ h(q)=0,  g(q)\leq 0\}$, where  $g:= (g_1, g_2)$. The point   $p \in \Omega$  does not  satisfy CPLD, but it satisfies RCRCQ.
\end{example}

Finally, let us show that RCRCQ implies RCPLD. We do this by providing the following equivalent description of RCRCQ:


\begin{proposition}\label{P:RCRCQ}
 Let  ${\cal K}  \subset \{1, \ldots , s \}$ be such that $\{\grad h_i(p):~ i \in  {\cal K}\}$  is a basis for the subspace generated by $\{\grad h_i(p):~ i=1, \ldots, s \}$.  RCRCQ  holds at $p\in \Omega$ if and only if there exists $\epsilon >0$ such that the following two conditions hold:
\begin{enumerate}
 \item[(i)] the rank of $\{\grad h_i(q):~i=1, \ldots , s\}$ is constant for all $q\in B_{\epsilon}(p)$;
  \item[(ii)] for all ${\cal J} \subset {\cal A}(p)$, if $\{\grad h_i(p):~i\in {\cal K}\}\cup \{\grad g_j(p):~ j\in {\cal J}\}$ is   linearly dependent, then $\{\grad h_i(q):~i\in {\cal K}\}\cup \{\grad g_j(q):~ j\in {\cal J}\}$ is  linearly dependent for all $q\in B_{\epsilon}(p)$. 
\end{enumerate} 
\end{proposition}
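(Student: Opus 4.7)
The plan is to prove both directions of the equivalence, using Lemma \ref{lemma:LD} to propagate linear independence of $\{\grad h_i(q):i\in{\cal K}\}$ from $p$ to a whole neighborhood, which lets us replace the full set $\{\grad h_i(q):i=1,\ldots,s\}$ by the basis indexed by ${\cal K}$ without changing spans or ranks.

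For the forward direction ($\Rightarrow$), I would first take ${\cal J}=\emptyset$ in RCRCQ to immediately obtain (i). Next, by Lemma \ref{lemma:LD} applied to the linearly independent set $\{\grad h_i(p):i\in{\cal K}\}$, I may shrink $\epsilon$ so that $\{\grad h_i(q):i\in{\cal K}\}$ remains linearly independent on $B_\epsilon(p)$; combined with the constancy of the rank $|{\cal K}|$ from (i), $\{\grad h_i(q):i\in{\cal K}\}$ is a basis of $\mathrm{span}\{\grad h_i(q):i=1,\ldots,s\}$ throughout $B_\epsilon(p)$. Therefore for every ${\cal J}\subset{\cal A}(p)$ the rank of $\{\grad h_i(q):i\in{\cal K}\}\cup\{\grad g_j(q):j\in{\cal J}\}$ equals the rank of $\{\grad h_i(q):i=1,\ldots,s\}\cup\{\grad g_j(q):j\in{\cal J}\}$, which is constant by RCRCQ. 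If the set is linearly dependent at $p$, its rank there is strictly less than $|{\cal K}|+|{\cal J}|$, so by constancy the same strict inequality holds at every $q\in B_\epsilon(p)$, giving (ii).

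For the backward direction ($\Leftarrow$), the same initial argument (using (i) and Lemma \ref{lemma:LD}) shows that in some $B_{\epsilon_0}(p)$ the rank of $\{\grad h_i(q):i=1,\ldots,s\}\cup\{\grad g_j(q):j\in{\cal J}\}$ coincides with the rank of $\{\grad h_i(q):i\in{\cal K}\}\cup\{\grad g_j(q):j\in{\cal J}\}$, so it suffices to establish that the latter is constant on a neighborhood. Fix ${\cal J}\subset{\cal A}(p)$ and choose ${\cal J}''\subset{\cal J}$ maximal such that $\{\grad h_i(p):i\in{\cal K}\}\cup\{\grad g_j(p):j\in{\cal J}''\}$ is linearly independent. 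By Lemma \ref{lemma:LD}, this set stays linearly independent on a (possibly smaller) ball $B_{\epsilon_1}(p)$. By maximality, for each $j\in{\cal J}\setminus{\cal J}''$ the enlarged set $\{\grad h_i(p):i\in{\cal K}\}\cup\{\grad g_{j'}(p):j'\in{\cal J}''\cup\{j\}\}$ is linearly dependent at $p$, and then hypothesis (ii) delivers a ball $B_{\epsilon_j}(p)$ on which this dependence persists.

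Taking $\epsilon$ to be the minimum of $\epsilon_0,\epsilon_1$ and the finitely many $\epsilon_j$, I would conclude that on $B_\epsilon(p)$ the smaller set is linearly independent while each of its one-element enlargements by $\grad g_j(q)$, $j\in{\cal J}\setminus{\cal J}''$, is linearly dependent; this forces $\grad g_j(q)\in\mathrm{span}(\{\grad h_i(q):i\in{\cal K}\}\cup\{\grad g_{j'}(q):j'\in{\cal J}''\})$ for every such $j$, so the rank equals $|{\cal K}|+|{\cal J}''|$ everywhere in $B_\epsilon(p)$, proving constancy and hence RCRCQ. The main subtlety is that hypothesis (ii) only gives information for the full index set ${\cal K}$ and not for its subsets; this is exactly why the maximality argument is carried out without ever removing indices from ${\cal K}$, only adding or removing indices on the inequality side, where hypothesis (ii) applies directly.
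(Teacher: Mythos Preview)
Your proof is correct and follows essentially the same line as the paper's: both directions hinge on using (i) together with Lemma~\ref{lemma:LD} to make $\{\grad h_i(q):i\in{\cal K}\}$ a basis of $\mathrm{span}\{\grad h_i(q):i=1,\ldots,s\}$ throughout a ball, and in the backward direction both arguments pick a maximal ${\cal J}''\subset{\cal J}$ with $A(p,{\cal K},{\cal J}'')$ linearly independent and then apply (ii) to each one-element enlargement. Your write-up is somewhat more explicit about invoking Lemma~\ref{lemma:LD} and about intersecting the finitely many balls, but the underlying strategy is the same.
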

\begin{proof}

Assume first that $p \in \Omega$ satisfies RCRCQ. Taking ${\cal J} = \emptyset$ in the definition of RCRCQ, we obtain $(i)$. In order to obtain $(ii)$, let ${\cal J} \subset {\cal A}(p)$ such that $\{\grad h_i(p):~i\in {\cal K}\}\cup \{\grad g_j(p):~ j\in {\cal J}\}$ is linearly dependent. Since $\{\grad h_i(p):~ i \in  {\cal K}\}$  is a basis for the subspace generated by $\{\grad h_i(p):~ i=1, \ldots, s \}$ and $(i)$, we have that there exists $\epsilon>0$ such that $\{\grad h_i(q):~ i \in  {\cal K}\}$  is a basis for the subspace generated by $\{\grad h_i(q):~ i=1, \ldots, s \}$ for all $q\in B_{\epsilon}(p)$. Thus, in accordance with RCRCQ, the rank of $\{\grad h_i(q):~i\in {\cal K}\}\cup \{\grad g_j(q):~ j\in {\cal J}\}$ is constant for all $q\in B_{\epsilon}(p)$. Consequently, $(ii)$ holds.

To prove the reciprocal assertion, let ${\cal J} \subset {\cal A}(p)$. It is worth noting that, owing to Lemma ~\ref{lemma:LD}, the rank of the set cannot decrease in a neighborhood. Let us choose ${\cal J}_1 \subset {\cal J}$ such that $A(p, {\cal K}, {\cal J}_1)$ is a basis for $A(p, {\cal K}, {\cal J})$ -- see the notation introduced in \eqref{eq:ap}. The case  ${\cal J}_1 = {\cal J}$ follows trivially. Consider the situation ${\cal J}_1 \neq {\cal J}$ and let $j \in {\cal J} $ with $j \notin {\cal J}_1 $. As a result of $(ii)$, $A(q, {\cal K}, {\cal J}_1) \cup \{\grad g_j(q)\}$ must continue to be linearly dependent for $q$ in a neighborhood of $p$. Therefore, rank  $A(q, {\cal K}, {\cal J}) = \left|{\cal K}\right| + \left|{\cal J}_1\right| $ for all $q\in B_{\epsilon}(p)$ and sufficiently small $\epsilon>0$. Considering the definition of ${\cal K}$ and $(i)$ we must have that $A(q, \{1,\dots,s\}, {\cal J})$ has constant rank for $q\in B_{\epsilon}(p)$, which completes the proof.
\end{proof}

Clearly, the equivalent definition of RCRCQ given by Proposition~\ref{P:RCRCQ} is independent of the choice of the index set ${\cal K}$. It is easy to see that the definition of RCPLD is also independent of this choice. This concludes the analysis of strict implications depicted in Figure \ref{relations}, where in particular we have that RCPLD is strictly weaker than CPLD and RCRCQ.

At this point, condition RCPLD is the weakest one among the ones we have presented. Thus, we will prove that RCPLD is a strict CQ with respect to the AKKT condition, which will be true for all other conditions that imply it. Before doing this, let us present yet another CQ called {\it Constant Rank of the Subspace Component} (CRSC \cite{Andreani_Haeser_Schuverdt_Silva_CRSC_2012}). Noticing that while RCRCQ improves upon CRCQ by noticing that there is no reason to consider every subset of equality constraints, CRSC improves upon RCRCQ by noticing that the same thing is true with respect to the inequality constraints. Namely, it is not the case that every subset of the active inequality constraints must be taken into account; only a particular fixed subset of the constraints maintaining the constant rank property is enough for guaranteeing the existance of Lagrange multipliers. The definition is as follows:


\begin{definition}  \label{def:crsc}
Let $\Omega$  be  given by \eqref{eq:constset}, $p\in \Omega$,   ${\cal A}(p)$  and  ${\cal L} (p)^{\circ}$  be given by \eqref{eq:constset} and \eqref{eq:PolarConeLin}, respectively. Define the index set  ${\cal J}_{-}(p) = \left\{ j \in {\cal A}(p):~-\grad g_j (p)\in {\cal L} (p)^{\circ} \right\}$. The point  $p$ is said  to satisfy  CRSC if   there exists  $\epsilon >0$ such that  the rank of $\{\grad h_i(q):~i=1, \ldots , s \}\cup \{\grad g_j(q):~ j\in {\cal J}_{-}(p)\}$ is constant for all $q\in B_{\epsilon}(p)$. 
\end{definition}

It is clear that CRSC is weaker than RCRCQ and MFCQ, but its relation with RCPLD is not simple to establish. Somewhat surprisingly, CRSC is strictly weaker than RCPLD in the Euclidean setting. The proof is somewhat elaborate \cite{Andreani_Haeser_Schuverdt_Silva_CRSC_2012}, so we did not pursue it in the Riemannian setting, as it was not needed in our developments. Instead, we simply show that CRSC does not imply RCPLD in any Riemannian manifold ${\mathcal M}$ of dimension $n\geq2$. Thus, we shall prove the convergence of the augmented Lagrangian method under either of these conditions, even though we expect CRSC to be weaker than RCPLD. At this point, results under CRSC are at least independent of the ones where RCPLD are employed. We proceed with the example where CRSC holds and RCPLD fails:


\begin{example} \label{ex:CRSC_Not_RCPLD}
Let ${\cal M}$ be an $n$-dimensional smooth Riemannian manifold   with $n\geq  2$. Take  $p\in {\cal M}$ and   $g:=(g_1 , \ldots , g_{4})\colon{\cal M}\to {\mathbb R}^4$  continuously differentiable functions satisfying the following conditions:
\begin{enumerate}
\item[(i)]  $g(p)=0$;
\item[(ii)] $\grad g_1(p)= -\grad g_{2}(p)$ and $\grad g_3(p)= -\grad g_{4}(p)$;
\item[(iii)] for all $\epsilon >0$, there exists  $q \in B_{\epsilon} (p)$ such that the set  $\{\grad g_1({q}), \grad g_2({q})\}$ is linearly independent with  $q \neq p$;
\item[(iv)] the set $\{\grad g_1(p), \grad g_3(p)\}$ is linearly independent;
\item[(v)] there exists $\epsilon >0$ such that $ rank \{\grad g_j(q):~j=1, \ldots , 4\}=2$  for all $q \in B_{\epsilon} (p)$.

\end{enumerate}
Consider a feasible set $\Omega:=\{q\in {\cal M}:~  g(q)\leq 0\}$ and $p \in \Omega$. It follows from condition  $(ii)$, that the set $\{\grad g_1(p) , \grad g_2 (p)\}$ is positive linearly dependent. Hence, using conditions $(i)$ and $(iii)$, we conclude that RCPLD does not hold at $p$. In order to see that CRSC is valid at $p$, it is enough to note $(v)$ together with the fact that $(i)$ and $(ii)$ imply that ${\cal J}_{-}(p) = \left\{ 1, 2, 3, 4 \right\}$. Next, we will present two examples in which conditions $(i)$, $(ii)$, $(iii)$, $(iv)$, and $(v)$ are satisfied.
\begin{itemize} 
\item Consider  the sphere  ${\cal M}:=\{(x, y, z)\in {\mathbb R}^3:~x^2+y^2+z^2=1\}$ and take  $p:=(0,0,1)$, $g_1(x, y, z):=x-y^2$, $g_2(x, y, z):=-x$, $g_3(x, y, z):=y-x^2$ and $g_4(x, y, z):=-y$, where clearly $(i)$ holds. Similarly to Example \ref{ex:CPLDNotCRCQ_MFCQ}, Let $q:=(x,y,z)$ with $y\neq0$ and $z\neq0$, $u_1:=(1,-2y,0)$, $u_2:=(-1,0,0)$, $u_3:=(-2x,1,0)$, and $u_4:=(0,-1,0)$. Since $u_i\in\{p\}^{\perp}, i=1,2,3,4$, it is easy to see $(ii)$ and $(iv)$. In order to prove $(iii)$, notice that $\{q,u_1,u_2\}$ is linearly independent. Hence, since $\grad g_i(q)=\Pi_{\{q\}^{\perp}}(u_i)=u_i-r_{u_i}q, r_{u_i}\in\R$ for $i=1,2,3,4$, similarly to Example~\ref{ex:CPLDNotCRCQ_MFCQ} we have that $\{\grad g_1({q}), \grad g_2({q})\}$ is linearly independent. To see that $(v)$ holds, take $\epsilon>0$ such that $z\neq0$ for all $q=(x,y,z)\in B_\epsilon(p)$. Hence, $\{u_2, u_4, q\}$ is linearly independent, which implies that $\{\grad g_2({q}), \grad g_4({q})\}$ is linearly independent. From the fact that ${\cal M}$ is $2$-dimensional, $(v)$ holds.

\item Consider a $2$-dimensional complete manifold ${\mathcal M}$ and define the functions $g_1(q):=\varphi_1(q)$, $g_2(q):=\varphi_2(q)$,   $g_3(q):=\varphi_3(q)$, and $g_4(q):=-\varphi_3(q)$ from \eqref{defphi}.  Similarly to the computations in Example~\ref{ex:CPLDNotCRCQ_MFCQ}, one can prove that items  $(i)$, $(ii)$, $(iii)$, and  $(iv)$ are satisfied.  By $(iv)$ and Lemma~\ref{lemma:LD}, we have that the rank of $\{\grad g_j(q):~j=1, \ldots , 4\}$ is at least $2$ for all $q \in  B_{{\epsilon}}(p)$ and some $\epsilon>0$. Therefore, taking into account that ${\cal M}$ is $2$-dimensional, condition $(v)$ is also satisfied. Figure~\ref{fig:CRSC Not RCPLD}  illustrates this example.
\end{itemize}

\begin{figure}[!htb]
\begin{subfigure}[b]{0.45\linewidth}
\centering
\begin{tikzpicture}[scale=0.4, transform shape]
\tkzInit[xmin=-10,xmax=10,ymin=-6,ymax=6];
\tkzDefPoint(-7,-1){A}
\tkzDefPoint(7,1){B}
\tkzDefPoint(3,-5){C}
\fill [magenta!10!white] (-7,-1) to [out=50, in=140] (7,1) to [out=188, in=75] (3,-5) to [out=110, in=-7] (-7,-1);
\draw (-7,-1) to [out=50, in=140] (7,1) to [out=188, in=75] (3,-5) to [out=110, in=-7] (-7,-1);
\tkzDefPoint(-2.35,2.35){D}
\tkzDefPoint(-5.6,0){E}
\tkzDefPoint(4,-0.99){F}
\tkzDefPoint(6,1.6){G}
\tkzDrawSegment(D,E)
\tkzDrawSegment(E,F)
\tkzDrawSegment(F,G)
\tkzDrawSegment(G,D)
\tkzDrawPolygon[fill=cyan!10](D,E,F,G);
\tkzDefPoint(-0.06,0.73){O}
\tkzDefPoint(4,3.5){I}
\tkzDefPoint(4,0.5){J}
\tkzDefPoint(4.1,0.687){L}
\tkzDefPoint(-6,-0.7){M}
\tkzLabelPoint[left, magenta!100!red](M){${\cal M}$}
\tkzDefPoint(-4.6,-0.1){S}
\tkzLabelPoint[above, cyan!100!blue, font=\fontsize{8}{5}\selectfont](S){$T_p{\cal M}$}
\tkzDefPoint(1.6,0.1){Q1}
\tkzDefPoint(-1.95,0.7){Q2}
\tkzDefPoint(0.2,1.7){Q3}
\node[right, magenta1] at (Q1) {$p_2$};
\node[left, magenta2] at (Q2) {$p_1$};	
\node[right, magenta3] at (Q3) {$p_3$};	
\draw[thick,magenta2, opacity=0.3, smooth, line width=0.5pt] [rotate=40] plot[domain=-161:168,variable=\t] ({1.8*cos(\t)-1}, {sin(\t)*1.9+1.8},{sin(\t)});
\draw[thick,magenta2, opacity=1, smooth, line width=0.5pt] [rotate=40] plot[domain=168:254,variable=\t] ({1.8*cos(\t)-1}, {sin(\t)*1.9+1.8},{sin(\t)});
\draw[thick,magenta1, opacity=0.3, smooth,  line width=0.5pt] [rotate=-20] plot[domain=-21:228.5,variable=\t] ({1.8*cos(\t)+1.5},{ sin(\t)*1.9+0.7},{sin(\t)});
\draw[thick,magenta1, opacity=1, line width=0.5pt] [rotate=-20] plot[domain=228.5:343,variable=\t] ({1.8*cos(\t)+1.5},{ sin(\t)*1.9+0.7},{sin(\t)});
\draw[thick, magenta3, rotate=20, smooth, opacity=1, line width=0.5pt] plot[domain=-9.3:122.5, variable=\t] ({1.3*cos(\t)+0.9},{ 1.5*sin(\t)+1.7},{sin(\t)+0.4});	
\draw[thick, magenta3, rotate=20, smooth, opacity=0.3, line width=0.5pt] plot[domain=122.5:355.7, variable=\t] ({1.3*cos(\t)+0.9},{ 1.5*sin(\t)+1.7},{sin(\t)+0.4});			
\draw[magenta!50!white] plot [smooth] coordinates {(Q1) (O) (Q2)}; 
\draw[magenta!50!white] plot [smooth] coordinates {(Q3) (0.12,1.5) (O) }; 
\draw[my tip=stealth, magenta1] (-0.06,0.73) -- (1.8,0.55) node [above right, magenta1] {$\grad g_2 (p)$};
\draw[my tip=stealth, magenta1] (-0.06,0.73) -- ($(1.8,0.55)!2!(-0.06,0.73)$)node [above left, magenta1] {$\grad g_1 (p)$};
\draw[my tip=stealth, magenta3] (-0.06,0.73) -- (0.13,1.75) node[left , magenta3] {$\grad g_3 (p) $};
\draw[my tip=stealth, magenta3] (-0.06,0.73) -- ($(0.13,1.75)!2!(-0.06,0.73)$)node[right, magenta3] {$\grad g_4 (p) $};
\tkzLabelPoint[below left](O){$p$}
\tkzDrawPoint(O)
\node[circle, magenta1, fill, inner sep=0.8pt] (Q1) at (1.6,0.1){};
\node[circle, magenta2, fill, inner sep=0.8pt] (Q2) at (-1.95,0.7){};
\node[circle, magenta3, fill, inner sep=0.8pt] (Q3) at (0.2,1.7){};
\end{tikzpicture}
\caption{ The scenario at point $p$.}
\end{subfigure}
\hspace{0.5cm}
\begin{subfigure}[b]{0.45\linewidth}
\centering
\begin{tikzpicture}[scale=0.4, transform shape]
\tkzInit[xmin=-10,xmax=10,ymin=-6,ymax=6];
\tkzDefPoint(-7,-1){A}
\tkzDefPoint(7,1){B}
\tkzDefPoint(3,-5){C}
\fill [magenta!10!white] (-7,-1) to [out=50, in=140] (7,1) to [out=188, in=75] (3,-5) to [out=110, in=-7] (-7,-1);
\draw (-7,-1) to [out=50, in=140] (7,1) to [out=188, in=75] (3,-5) to [out=110, in=-7] (-7,-1);
\tkzDefPoint(-2.35,2.7){D}
\tkzDefPoint(-5.6,-0.55){E}
\tkzDefPoint(3.7,-1.55){F}
\tkzDefPoint(6,2.05){G}
\tkzDrawSegment(D,E)
\tkzDrawSegment(E,F)
\tkzDrawSegment(F,G)
\tkzDrawSegment(G,D)
\tkzDrawPolygon[fill=cyan!17](D,E,F,G);
\tkzDefPoint(-0.06,0.73){O}
\tkzDefPoint(-0.09,0.53){Q}
\tkzDefPoint(4,3.5){I}
\tkzDefPoint(4,0.5){J}
\tkzDefPoint(4.1,0.687){L}
\tkzDefPoint(-6,-0.7){M}
\tkzLabelPoint[left, magenta!100!red](M){${\cal M}$}
\tkzDefPoint(-4.6,-0.6){S}
\tkzLabelPoint[above, cyan!100!blue, font=\fontsize{8}{5}\selectfont](S){$T_q{\cal M}$}
\node[circle, magenta1, fill, inner sep=0.7pt] (Q1) at (1.6,0.1){};
\node[below, magenta1] at (Q1) {$p_2$};
\node[circle, magenta2, fill, inner sep=0.7pt] (Q2) at (-1.95,0.7){};
\node[below, magenta2] at (Q2) {$p_1$};	
\node[circle, magenta3, fill, inner sep=0.7pt] (Q3) at (0.25,1.75) {};
\node[below right, magenta3] at (Q3) {$p_3$};	
\draw[thick,magenta2, opacity=0.3, smooth, line width=0.3pt] [rotate=40] plot[domain=0:360,variable=\t] ({1.8*cos(\t)-1}, {sin(\t)*1.9+1.8},{sin(\t)});
\draw[thick,magenta4, opacity=0.3, smooth, dash pattern=on 0.2pt off 0.3pt, line width=0.3pt] [rotate=40] plot[domain=0:360,variable=\t] ({1.85*cos(\t)-1}, {sin(\t)*1.95+1.8},{sin(\t)});
\draw[thick,magenta1, opacity=0.3, smooth,  line width=0.3pt] [rotate=-20] plot[domain=-49.4:260.1,variable=\t] ({1.8*cos(\t)+1.5},{ sin(\t)*1.9+0.7},{sin(\t)});
\draw[thick,magenta1, opacity=1, line width=0.3pt] [rotate=-20] plot[domain=260.1:310.6,variable=\t] ({1.8*cos(\t)+1.5},{ sin(\t)*1.9+0.7},{sin(\t)});
\draw[thick,magenta4, opacity=0.3, smooth, dash pattern=on 0.2pt off 0.3pt,  line width=0.3pt] [rotate=-20] plot[domain=-46:257,variable=\t] ({1.85*cos(\t)+1.5},{ sin(\t)*1.95+0.7},{sin(\t)});
\draw[thick,magenta4, opacity=1, dash pattern=on 0.2pt off 0.3pt, line width=0.3pt] [rotate=-20] plot[domain=257:314,variable=\t] ({1.85*cos(\t)+1.5},{ sin(\t)*1.95+0.7},{sin(\t)});	
\draw[thick, magenta3, rotate=20, opacity=0.3, smooth, line width=0.3pt] plot[domain=96.1:379.5, variable=\t] ({1.3*cos(\t)+0.9},{ 1.5*sin(\t)+1.7},{sin(\t)+0.4});
\draw[thick, magenta3, rotate=20, opacity=1, smooth, line width=0.3pt] plot[domain=19.5:96.1, variable=\t] ({1.3*cos(\t)+0.9},{ 1.5*sin(\t)+1.7},{sin(\t)+0.4});	
\draw[thick, magenta3, rotate=20, opacity=0.3, smooth, dash pattern=on 0.2pt off 0.3pt, line width=0.3pt] plot[domain=108.5:370, variable=\t] ({1.5*cos(\t)+0.9},{ 1.7*sin(\t)+1.7},{sin(\t)+0.4});	
\draw[thick, magenta3, rotate=20, opacity=1, smooth, dash pattern=on 0.2pt off 0.3pt, line width=0.3pt] plot[domain=10:108.5, variable=\t] ({1.5*cos(\t)+0.9},{ 1.7*sin(\t)+1.7},{sin(\t)+0.4});	
\draw[my tip=stealth, magenta2] (-0.09,0.53) -- (1.8,0.8) node [right, magenta2] {$\grad g_2 (q)$};
\draw[my tip=stealth, magenta1] (-0.09,0.53) -- ($(1.8,0)!2!(-0.09,0.53)$) node [left, magenta1] {$\grad g_1 (q) $};
\draw[my tip=stealth, magenta3] (-0.09,0.53) --  (0.15,1.8) node[above left , magenta3] {$\grad g_3 (q) $};
\draw[my tip=stealth, magenta3] (-0.09,0.53) -- ($(0.15,1.8)!2!(-0.09,0.53)$)  node[left, magenta3] {$\grad g_4 (q) $};
\tkzLabelPoint[above left](O){$p$}
\tkzDrawPoint(O)
\tkzLabelPoint[below left](Q){${q}$}
\tkzDrawPoint(Q)

\end{tikzpicture}
\caption{Case in the neighborhood of the $p$ }
\end{subfigure}

	\caption{Illustrative figure for Example~\ref{ex:CRSC_Not_RCPLD} where RCPLD fails but CRSC is satisfied. Although the subset of gradients indexed by $\{1,2\}$ loses positive linear dependence for $q$ near $p$, all gradients remain to span a $2$-dimensional vector space (the whole tangent space) for $q$ near $p$.}
\label{fig:CRSC Not RCPLD}
\end{figure}
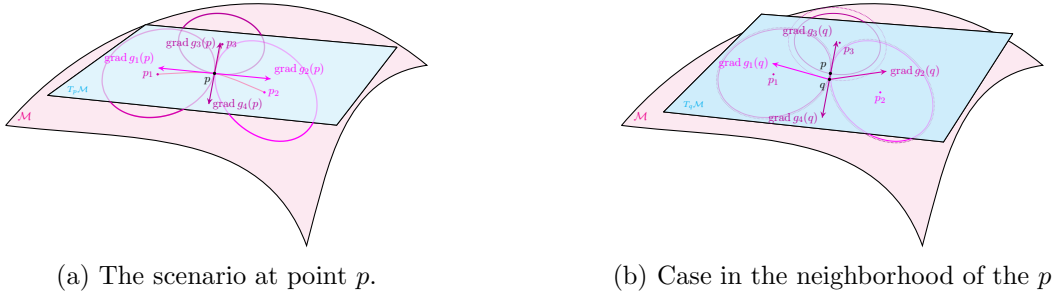

\end{example}

In our view, CRSC is the most interesting one of all previously defined conditions. Although we do not pursue its extensions to the Riemannian setting, we mention a few of its properties known in the Euclidean case. First, it has an elegant mathematical description. Also, the index set ${\cal J}_{-}(p)$ can be viewed as the index set of active inequality constraints that are treated as equality constraints in the polar of the linearized cone ${\cal{L}}^\circ(p)$ \eqref{eq:PolarConeLin}. Actually, this interpretation holds also for the cone ${\cal{L}}(p)$, since ${\cal J}_{-}(p)$ can be equivalently stated as the set of indexes $j\in{\cal A}(p)$ such that $\left\langle \grad g_j(p) , v \right\rangle = 0$ for all $v\in{\cal{L}}(p)$ \cite{positivity}. However, surprisingly, these interpretations are not by chance, since it was proved \cite{Andreani_Haeser_Schuverdt_Silva_CRSC_2012} that when $p\in\Omega$ satisfies CRSC, the constraints $g_j(q)\leq0, j\in{\cal J}_{-}(p)$ can only be satisfied as equalities for a feasible point $q$ in a small enough neighborhood around $p$. That is, one can safely replace the inequalities $g_j(q)\leq0, j\in{\cal J}_{-}(p)$ with equalities $g_j(q)=0, j\in{\cal J}_{-}(p)$ without locally altering the feasible set and in such a way that MFCQ holds. This result is connected to the one in \cite{shulu} which shows that whenever CRCQ is satisfied, there exists a local reformulation of the problem such that MFCQ holds. In fact, this procedure is well known in linear conic programming as {\it facial reduction}, that is, when the constraints are such that there is no interior point, there is an efficient procedure to replace the cone with one of its faces in such a way that a relative interior point exists. The extension of CRSC to the conic context and its connections with the facial reduction procedure are described in \cite{CRCQfaces2}, where they also show that CRSC may also provide the strong second-order necessary optimality condition depending on a single Lagrange multiplier by considering the constant rank property for all subsets of constraints that include ${\cal J}_{-}(p)$ and all equalities. Finally, we mention an additional property that holds in the Euclidean setting for all CQs discussed in this paper, that is, that they all imply that an error bound can be computed. We state this as the following conjecture in the Riemannian setting:

{\bf Conjecture:} Let ${\cal M}$ be a complete Riemannian manifold with dimension $n\geq 2$. Let $p\in\Omega$ be such that CRSC or RCPLD is satisfied. Then, there exists $\epsilon>0$ and $\alpha>0$ such that $$\inf_{w\in\Omega}d(q,w)\leq\alpha\max\{0,g_1(q),\dots,g_m(q),|h_1(q)|,\dots,|h_s(q)|\}$$ for all $q\in B_\epsilon(p)$.

Finally, let us prove that all conditions proposed so far are constraint qualifications. We do this by showing that they are strict CQs with respect to the necessary optimality condition AKKT from Theorem~\ref{def:AKKT}, since this gives us the main result of global convergence of Algorithm~\ref{Alg:LAA}. We start by showing that CRSC is a strict CQ, and we note that when the condition was introduced in the Euclidean setting \cite{Andreani_Haeser_Schuverdt_Silva_CRSC_2012}, only an indirect proof of this fact was presented. Thus, a clear direct proof was not available in the literature even in the Euclidean setting.

\begin{theorem} 
Suppose that $p\in\Omega$ satisfies CRSC. If $p$ is an AKKT point, then $p$ is a KKT point.
\end{theorem}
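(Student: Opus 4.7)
The plan is to use CRSC to replace the AKKT relation by a reduced one over a basis guaranteed by the constant rank property, then to prove boundedness of the reduced multipliers by exploiting the defining property of ${\cal J}_{-}(p)$, and finally to correct signs using the fact that $\pm\grad g_{j'}(p)\in {\cal L}(p)^\circ$ for $j'\in {\cal J}_{-}(p)$.

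First, I would use CRSC to select ${\cal I}^* \subset \{1,\dots,s\}$ and ${\cal J}^* \subset {\cal J}_{-}(p)$ so that $A(p, {\cal I}^*, {\cal J}^*)$ is a basis for the span of $A(p, \{1,\dots,s\}, {\cal J}_{-}(p))$. By Lemma~\ref{lemma:LD} together with CRSC, there is $\epsilon>0$ such that for every $q\in B_\epsilon(p)$ the set $A(q,{\cal I}^*,{\cal J}^*)$ remains a basis for the span of $A(q, \{1,\dots,s\}, {\cal J}_{-}(p))$, and hence each $\grad h_i(q)$ with $i\notin{\cal I}^*$ and each $\grad g_j(q)$ with $j\in {\cal J}_{-}(p)\setminus{\cal J}^*$ is a continuous linear combination of the basis vectors. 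Substituting these expansions into the AKKT relation from Theorem~\ref{def:AKKT} and setting $J^*:={\cal A}(p)\setminus{\cal J}_{-}(p)$, I obtain sequences $(\hat\lambda^k,\hat\mu^k,\mu^k)$ satisfying
$$ \grad f(p^k) + \sum_{i'\in{\cal I}^*} \hat\lambda^k_{i'}\grad h_{i'}(p^k) + \sum_{j'\in{\cal J}^*} \hat\mu^k_{j'}\grad g_{j'}(p^k) + \sum_{j\in J^*} \mu^k_j\grad g_j(p^k) = \epsilon^k, $$
where $\mu^k_j\ge 0$ for $j\in J^*$ (inherited from AKKT), while the reduced multipliers $\hat\mu^k_{j'}$ on ${\cal J}^*$ may have arbitrary sign, since the basis expansion mixes signs.

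Next, I would argue by contradiction that $(\hat\lambda^k,\hat\mu^k,\mu^k)$ is bounded. Dividing by the max-norm $M_k\to\infty$ and extracting a subsequential limit produces a nontrivial relation at $p$ with the $J^*$ part non-negative. Taking the inner product with any $v\in{\cal L}(p)$ kills the $\grad h_{i'}(p)$ contributions and kills the $\grad g_{j'}(p)$ contributions, since $j'\in{\cal J}^*\subset {\cal J}_{-}(p)$ forces $\pm\grad g_{j'}(p)\in{\cal L}(p)^\circ$ and hence $\langle\grad g_{j'}(p),v\rangle=0$. The residual equation $\sum_{j\in J^*}\bar\mu^{J^*}_j\langle\grad g_j(p),v\rangle=0$ has only non-positive summands, so each vanishes. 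If $\bar\mu^{J^*}_{j_0}>0$ for some $j_0\in J^*$, then $\langle\grad g_{j_0}(p),v\rangle=0$ for every $v\in{\cal L}(p)$, which forces $-\grad g_{j_0}(p)\in{\cal L}(p)^\circ$ and thus $j_0\in{\cal J}_{-}(p)$, a contradiction. Hence $\bar\mu^{J^*}=0$, and the surviving part of the relation contradicts linear independence of $A(p,{\cal I}^*,{\cal J}^*)$.

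Finally, boundedness allows extracting limits $(\hat\lambda,\hat\mu,\mu)$ of the reduced multipliers, producing a relation of the same shape at $p$. To promote this to valid KKT multipliers with non-negative sign on every inequality gradient, for each $j'\in{\cal J}^*$ with $\hat\mu_{j'}<0$ I would use $-\grad g_{j'}(p)\in{\cal L}(p)^\circ$ (available since $j'\in{\cal J}_{-}(p)$) to rewrite $\hat\mu_{j'}\grad g_{j'}(p)$ as a combination of $\grad h_i(p)$ and non-negative multiples of $\grad g_l(p)$, $l\in{\cal A}(p)$; absorbing these corrections yields $-\grad f(p)\in{\cal L}(p)^\circ$, i.e., KKT. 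The hard part will be the boundedness step, where the orthogonality characterization of ${\cal J}_{-}(p)$ with respect to ${\cal L}(p)$ must be combined with the CRSC-guaranteed basis in just the right way to close the argument.
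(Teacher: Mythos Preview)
Your proposal is correct and follows essentially the same architecture as the paper's proof: reduce the AKKT relation to a basis $A(p,{\cal I}^*,{\cal J}^*)$ of the span of $A(p,\{1,\dots,s\},{\cal J}_-(p))$ using CRSC, show boundedness of the reduced multipliers by contradiction, then fix negative signs on ${\cal J}^*$ via $-\grad g_{j'}(p)\in{\cal L}(p)^\circ$. The only stylistic difference is in the boundedness step: you argue via the orthogonality characterization $\langle\grad g_{j'}(p),v\rangle=0$ for $v\in{\cal L}(p)$ and pair the limiting relation with test vectors, whereas the paper works directly with the cone formula \eqref{eq:PolarConeLin} and its closure under addition to deduce $-\grad g_{j_0}(p)\in{\cal L}(p)^\circ$ whenever a $J^*$-coefficient survives---these are equivalent dual viewpoints of the same Farkas-type fact.
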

\begin{proof}
As in Definition~\ref{def:crsc}, let ${\cal J}_{-}(p) := \left\{ j \in {\cal A}(p):~-\grad g_j(p) \in {\cal L} (p)^{ \circ} \right\}$ and we denote ${\cal J}_+(p):={\cal A}(p)\backslash{\cal J}_-(p)$. Since $p$ is an AKKT point of problem~\eqref{PNL},  there exist   sequences $(p^k)_{k\in {\mathbb N}}\subset {\cal M}$, $(\lambda^k)_{k\in {\mathbb N}}\subset {\mathbb R}^s$, and $(\mu^k)_{k\in {\mathbb N}}\subset {\mathbb R}_+^m$ with $\mu^k_j=0$, for all $j\notin {\cal A}(p)$ such that    $\lim_{k\to \infty}p^k=p$ and 
\begin{equation}\label{neweq:akktcrsc2}
\grad f(p^k)+\sum_{i=1}^s\lambda_i^k\grad h_i(p^k)+ \sum_{\ell \in {\cal J}_{-}(p)} \mu_\ell^k \grad g_\ell(p^k)+\sum_{j \in {\cal J}_{+}(p)} \mu_j^k \grad g_j(p^k)=:\epsilon_k, \quad \forall k\in {\mathbb N}, 
\end{equation}
where $\lim_{k\to \infty} \epsilon_k=0$. Let ${\cal I}\subset\{1,\dots,s\}$ and ${\cal J}\subset{\cal J}_{-}(p)$ be such that $A(p,{\cal I},{\cal J})$ is a basis for the subspace generated by $A(p,\{1,\dots,s\},{\cal J}_-(p))$ (here we use the notation introduced in \eqref{eq:ap}). Thus, \eqref{neweq:akktcrsc2} can be rewritten as 
\begin{equation}\label{aux000}
\grad f(p^k)+\sum_{i\in{\cal I}}\tilde{\lambda}_i^k\grad h_i(p^k)+ \sum_{\ell \in {\cal J}} \tilde{\mu}_\ell^k \grad g_\ell(p^k)+\sum_{j \in {\cal J}_{+}(p)} \mu_j^k \grad g_j(p^k)=\epsilon_k, \quad \forall k\in {\mathbb N}, 
\end{equation}
for suitable $\tilde{\lambda}_i^k\in\R, i\in{\cal I}$ and $\tilde{\mu}_{\ell}^k\in\R, \ell\in{\cal J}$. If all the sequences $(\tilde{\lambda}_i^k)_{k\in\mathbb{N}}, i\in{\cal I}$, $(\tilde{\mu}_{\ell}^k)_{k\in\mathbb{N}}, \ell\in{\cal J}$, and $(\mu_j^k)_{k\in\mathbb{N}}, j\in{\cal J}_+$ are bounded, we may take a suitable convergent subsequence $(\tilde{\lambda}_i^k)_{k\in K_1}\to\tilde{\lambda}_i\in\R, i\in{\cal I}$, $(\tilde{\mu}_{\ell}^k)_{k\in K_1}\to\tilde{\mu}_{\ell}\in\R, \ell\in{\cal J}$, and $(\mu_j^k)_{k\in K_1}\to\mu_{j}\in\R_+, j\in{\cal J}_+$ such that
\begin{equation*}
\grad f(p)+\sum_{i\in{\cal I}}\tilde{\lambda}_i\grad h_i(p)+ \sum_{\ell \in {\cal J}} \tilde{\mu}_\ell \grad g_\ell(p)+\sum_{j \in {\cal J}_{+}(p)} \mu_j \grad g_j(p)=0.
\end{equation*}
Let us see that this implies that $p$ is a KKT point. First, note that ${\cal J}\subset{\cal J}_{-}(p)$ with ${\cal J}_{-}(p)\cup{\cal J}_{+}(p)=A(p)$. If some $\tilde{\mu}_{\ell_0}<0$, $\ell_0\in{\cal J}$, by the definition of the set ${\cal J}_{-}(p)$, we have $\tilde{\mu}_{\ell_0}\nabla g_{\ell_0}\in{\mathcal L}^{\circ}(p)$. But from \eqref{eq:PolarConeLin}, one can see that ${\mathcal L}^{\circ}(p)$ is closed under addition, which implies that $-\grad f(p)\in{\mathcal L}^{\circ}(p)$, that is, $p$ is a KKT point.
Otherwise, if it is not the case that all sequences are bounded, let us take a subsequence $K_2\subset\mathbb{N}$ such that $\lim_{k\in K_2}M_k=+\infty$, where $M_k=\max\{|\tilde{\lambda}^k_i|, i\in{\cal I}; |\tilde{\mu}^k_{\ell}|, \ell\in{\cal J}; \mu^k_j, j\in{\cal J}_+(p)\}$. Dividing \eqref{aux000} by $M_k$ and taking the limit on a suitable subsequence $K_3\subset K_2$ such that $\lim_{k\in K_3}\frac{\tilde{\lambda}^k_i}{M_k}=\alpha_i\in\R$, $\lim_{k\in K_3}\frac{\tilde{\mu}^k_{\ell}}{M_k}=\beta_\ell\in\R$, and $\lim_{k\in K_3}\frac{\mu^k_j}{M_k}=\gamma_j\geq0$ with not all $\alpha_i, \beta_\ell, \gamma_j$ equal to zero, we arrive at
$$
\sum_{i\in{\cal I}}\alpha_i\grad h_i(p)+ \sum_{\ell \in {\cal J}} \beta_\ell \grad g_\ell(p)+\sum_{j \in {\cal J}_{+}(p)} \gamma_j \grad g_j(p)=0.
$$
However, by the definition of ${\cal J}_+(p)$, we must have $\gamma_j=0$ for all $j\in{\cal J}_+(p)$, since otherwise, by replacing the scalars when some $\beta_{\ell}<0$ (as previously done) we would have $-\nabla g_j(p)\in{\cal L}(p)^{\circ}$. Hence $A(p,{\cal I},{\cal J})$ is linearly dependent. This contradicts the definition of the index sets ${\cal I}$ and ${\cal J}$.
\end{proof}

Similarly, we show that RCPLD is a strict CQ.

\begin{theorem} 
Suppose that $p\in\Omega$ satisfies RCPLD. If $p$ is an AKKT point, then $p$ is a KKT point.
\end{theorem}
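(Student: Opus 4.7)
The plan is to mimic the structure of the CRSC proof while exploiting the specific two-part structure of RCPLD together with Carathéodory's Lemma. Start from the AKKT sequences $(p^k), (\lambda^k), (\mu^k)$ with $p^k\to p$, $\mu^k_j=0$ for $j\notin{\cal A}(p)$, and
\begin{equation*}
\grad f(p^k)+\sum_{i=1}^s\lambda_i^k\grad h_i(p^k)+\sum_{j\in{\cal A}(p)}\mu_j^k\grad g_j(p^k)=\epsilon_k\to 0.
\end{equation*}
Pick ${\cal K}\subset\{1,\dots,s\}$ as in part (b) of Definition~\ref{def:NewCQS}(iv), so that $\{\grad h_i(p):i\in{\cal K}\}$ is a basis for the span of the equality gradients at $p$. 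By part (a) of RCPLD, the rank of $\{\grad h_i(q):i=1,\dots,s\}$ is constant on some $B_\epsilon(p)$, hence for $k$ large enough $\{\grad h_i(p^k):i\in{\cal K}\}$ remains a basis for the span of all equality gradients at $p^k$. This allows us to absorb the terms $\lambda_i^k\grad h_i(p^k)$ for $i\notin{\cal K}$ into the remaining ones, producing new coefficients $\tilde\lambda_i^k\in\R$ for $i\in{\cal K}$ such that
\begin{equation*}
\grad f(p^k)+\sum_{i\in{\cal K}}\tilde\lambda_i^k\grad h_i(p^k)+\sum_{j\in{\cal A}(p)}\mu_j^k\grad g_j(p^k)=\epsilon_k.
\end{equation*}

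Next, I would apply Lemma~\ref{l:Caratheodory} (Carathéodory) at each $k$ to the non-vanishing inequality multipliers: if we pass from the full index set to a subset ${\cal J}_k\subset\{j\in{\cal A}(p):\mu_j^k>0\}$, we obtain coefficients $\bar\lambda_i^k\in\R$ ($i\in{\cal K}$) and $\bar\mu_j^k>0$ ($j\in{\cal J}_k$) with
\begin{equation*}
\grad f(p^k)+\sum_{i\in{\cal K}}\bar\lambda_i^k\grad h_i(p^k)+\sum_{j\in{\cal J}_k}\bar\mu_j^k\grad g_j(p^k)=\epsilon_k,
\end{equation*}
and such that $\{\grad h_i(p^k):i\in{\cal K}\}\cup\{\grad g_j(p^k):j\in{\cal J}_k\}$ is linearly independent. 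Since there are only finitely many candidate subsets ${\cal J}_k\subset{\cal A}(p)$, after extracting a subsequence we may assume ${\cal J}_k={\cal J}$ is fixed.

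The crucial step, and the main obstacle, is to show that the sequences $(\bar\lambda_i^k)_{k}$ and $(\bar\mu_j^k)_{k}$ are bounded. I would argue by contradiction: if $M_k:=\max\{|\bar\lambda_i^k|,\bar\mu_j^k\}\to\infty$, then dividing the displayed equation by $M_k$ and passing to a further subsequence yields scalars $\alpha_i\in\R$ ($i\in{\cal K}$) and $\beta_j\geq 0$ ($j\in{\cal J}$), not all zero, with
\begin{equation*}
\sum_{i\in{\cal K}}\alpha_i\grad h_i(p)+\sum_{j\in{\cal J}}\beta_j\grad g_j(p)=0.
\end{equation*}
Thus $\{\grad h_i(p):i\in{\cal K}\}\cup\{\grad g_j(p):j\in{\cal J}\}$ is positive-linearly dependent, and by part (b) of RCPLD the set $\{\grad h_i(q):i\in{\cal K}\}\cup\{\grad g_j(q):j\in{\cal J}\}$ must be linearly dependent for all $q\in B_\epsilon(p)$. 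For $k$ sufficiently large, $p^k\in B_\epsilon(p)$, contradicting the linear independence guaranteed by Carathéodory's Lemma.

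With boundedness established, a final subsequence extraction produces limits $\bar\lambda_i\in\R$ ($i\in{\cal K}$), $\bar\mu_j\geq 0$ ($j\in{\cal J}$) satisfying
\begin{equation*}
\grad f(p)+\sum_{i\in{\cal K}}\bar\lambda_i\grad h_i(p)+\sum_{j\in{\cal J}}\bar\mu_j\grad g_j(p)=0,
\end{equation*}
with $\bar\mu_j=0$ for $j\notin{\cal A}(p)$ (by completing to all indices with zero multipliers). This is precisely the KKT condition at $p$.
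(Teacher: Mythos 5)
Your proof is correct and follows essentially the same route as the paper's: reduce the equality gradients to the basis indexed by ${\cal K}$ using part (a) of RCPLD, apply Carath\'eodory's Lemma at each $k$, fix the index set ${\cal J}$ along a subsequence, and then run the boundedness dichotomy, where the unbounded case yields positive-linear dependence at $p$ that, via part (b), contradicts the linear independence at $p^k$ produced by Carath\'eodory. The only difference is that you write out explicitly the steps the paper delegates to ``similar to the previous (CRSC) proof,'' which is fine.
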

\begin{proof} The proof is similar to the previous one, however without partitioning $A(p)$. That is, consider the previous proof with ${\cal J}_{-}(p)$ replaced by $\emptyset$ and ${\cal J}_{+}(p)$ replaced by ${\cal A}(p)$. We arrive similarly to \eqref{aux000} to a sequence
$$
\grad f(p^k)+\sum_{i\in{\cal I}}\tilde{\lambda}_i^k\grad h_i(p^k)+\sum_{j \in {\cal A}(p)} \mu_j^k \grad g_j(p^k)=\epsilon_k, \quad \forall k\in {\mathbb N}, 
$$
with $\tilde{\lambda}_i^k\in\R, i\in{\cal I}$, $\mu_j^k\geq0, j\in{\cal A}(p)$, $\lim_{k\in\mathbb{N}}p^k=p$, $\lim_{k\in\mathbb{N}}\epsilon_k=0$, and $\{\grad h_i(p): i\in{\cal I}\}$ linearly independent.

For every $k\in\mathbb{N}$, we apply Lemma \ref{l:Caratheodory} to arrive at
\begin{equation}\label{aux001}
\grad f(p^k)+\sum_{i\in{\cal I}}\bar{\lambda}_i^k\grad h_i(p^k)+\sum_{j \in {\cal J}_k} \bar{\mu}_j^k \grad g_j(p^k)=\epsilon_k,
\end{equation}
for some $\bar{\lambda}_i^k\in\R, i\in{\cal I}$, $\bar{\mu}_j^k\geq0, j\in{\cal J}_k\subset{\cal A}(p)$, and all $k\in {\mathbb N}$, where $A(p^k,{\cal I},{\cal J}_k)$ is linearly independent. Let us take a subsequence such that ${\cal J}_k$ is constant, say, ${\cal J}_k\equiv{\cal J}$ for all $k\in K_1\subset\mathbb{N}$. The proof now follows similarly to the previous one considering $M_k:=\max\{|\bar{\lambda}_i^k|, i\in{\cal I};\bar{\mu}_j^k, j\in{\cal J}\}$. If $(M_k)_{k\in K_1}$ is bounded, one may take the limit in \eqref{aux001} for a suitable subsequence to see that $p$ is a KKT point. Otherwise, dividing \eqref{aux001} by $M_k$ we see that $A(p,{\cal I},{\cal J})$ is positive-linearly dependent, which contradicts the definition of RCPLD.
\end{proof}

We formalize our results in the following:

\begin{corollary}
\label{cor}
Let $p$ be a feasible limit point of a sequence $(p^k)_{k\in\mathbb{N}}$ generated by Algorithm~\ref{Alg:LAA} such that $p$ satisfies RCPLD or CRSC. Then $p$ satisfies the KKT conditions.
\end{corollary}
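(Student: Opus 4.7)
The plan is to combine two ingredients already in place: the fact (due to Yamakawa and Sato, cited earlier in the paper) that every feasible limit point of the safeguarded augmented Lagrangian Algorithm~\ref{Alg:LAA} is an AKKT point in the sense of Theorem~\ref{def:AKKT}, together with the two theorems just proved, which assert that RCPLD and CRSC are strict CQs with respect to the AKKT condition.

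Concretely, I would proceed in two short steps. First, I would invoke the global convergence analysis of Algorithm~\ref{Alg:LAA} from \cite{Yamakawa_Sato2022}, which guarantees that, whenever $p$ is a feasible limit point of $(p^k)_{k\in\mathbb{N}}$, the primal sequence $(p^k)_{k\in\mathbb{N}}$ itself is an AKKT sequence for $p$, with the dual sequence $(\lambda^k,\mu^k)_{k\in\mathbb{N}}$ produced in Step~2 of the algorithm serving as the corresponding approximate multipliers. This yields the existence of sequences satisfying items (i)--(iii) of Theorem~\ref{def:AKKT} at $p$.

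Second, having established that $p$ is an AKKT point, I would apply whichever of the two preceding theorems is relevant: if $p$ satisfies RCPLD, the theorem on RCPLD yields that $p$ is a KKT point; if $p$ satisfies CRSC, the corresponding theorem produces the same conclusion. Either way, $p$ satisfies the KKT conditions, which is what we wanted.

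There is no real obstacle here, since all the substantive work has already been done in the previous two theorems, where the nontrivial analyses of rank, positive-linear dependence, and decomposition of the multiplier sequences (including the use of Carathéodory's Lemma~\ref{l:Caratheodory} in the RCPLD case and the partition into ${\cal J}_-(p)$ and ${\cal J}_+(p)$ in the CRSC case) have been carried out. The corollary itself is therefore a one-line combination of these two facts and essentially just packages the global convergence conclusion in the form promised throughout the section.
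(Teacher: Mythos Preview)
Your proposal is correct and matches the paper's approach exactly: the corollary is stated without proof in the paper, as it is an immediate consequence of the Yamakawa--Sato result that feasible limit points of Algorithm~\ref{Alg:LAA} are AKKT points, combined with the two preceding theorems showing that CRSC and RCPLD are strict CQs for AKKT.
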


Notice that differently from the result under MFCQ, where the dual AKKT sequence $(\lambda^k,\mu^k)_{k\in\mathbb{N}}$ is necessarily bounded, and thus dual convergence to a Lagrange multiplier is obtained, our result does not include convergence of the dual sequence. In the next section we prove that this can be obtained under a condition weaker than CPLD and independent of RCPLD known as quasinormality \cite{hestenes}. In order to do this, we shall extend a stronger sequential optimality condition to the Riemannian setting known as Positive-AKKT condition (PAKKT \cite{Andreani_Fazzio_Schuverdt_Secchin2019}).
   
\section{A stronger sequential optimality condition} \label{eq:PAKKT}

The quasinormality constraint qualification (QN) was introduced in \cite{hestenes} and it has been popularized in the book \cite{Bertsekas2016} in connection with convergence of the external penalty method. Recently, it has been connected with the notion of so-called Enhanced KKT conditions, guaranteeing boundedness of the corresponding set of Enhanced Lagrange multipliers \cite{enhanced}. QN is a fairly weak CQ, been known to be strictly weaker than CPLD \cite{Andreani_Martinez_Schuverdt2005} while still implying the Error Bound property \cite{eb2} in the Euclidean setting. In this section we will extend an important algorithmic property of QN that goes beyond what we have proved for RCPLD and CRSC. That is, besides QN being a strict CQ with respect to the AKKT condition, namely, global convergence of Algorithm~\ref{Alg:LAA} in the sense of Corollary \ref{cor} is also valid under QN, we will show that the dual sequence generated by Algorithm~\ref{Alg:LAA} under QN is in fact bounded. In order to do this, we will show that QN is a strict CQ with respect to a stronger sequential optimality condition known as Positive-AKKT (PAKKT) condition \cite{Andreani_Fazzio_Schuverdt_Secchin2019}.


We start by introducing the PAKKT condition in the Riemannian setting, showing that it is indeed a genuine necessary optimality condition for problem~\eqref{PNL}. Our definition considers a modification of the original one as suggested in \cite{Andreani_Haeser_Schuverdt_Secchin_Silva2022}.
\begin{definition}\label{def:pAKKT}
The Positive-Approximate-KKT (PAKKT) condition  for problem~\eqref{PNL} is satisfied at a  point $p\in \Omega$ if there exist sequences $(p^k)_{k\in {\mathbb N}}\subset {\cal M}$, $(\lambda^k)_{k\in {\mathbb N}}\subset {\mathbb R}^s$ and $(\mu^k)_{k\in {\mathbb N}}\subset {\mathbb R}_+^m$ such that
\begin{enumerate}
	\item[(i)] $\lim_{k\to \infty}\ p^k =  p$;
  \item[(ii)] $\lim_{k\to \infty} \left\| \grad L(p^k, \lambda^k, \mu^k)\right\| =  0$;
	\item[(iii)] $\mu_j^k=0$ for all $j\not\in{\cal A}(p)$ and sufficiently large $k$;
	\item[(iv)] If $\gamma_k := \left\|(1,\lambda^k,\mu^k)\right\|_{\infty}\to+\infty$ it holds:
\begin{equation}\label{df.PAKKTcontroleSinalRestIguald}
 \lim_{k\to \infty} \frac{\left|\lambda_i^k\right|}{\gamma_k} >  0 \quad   \Longrightarrow  \quad   \lambda_i^kh_i(p^k) >0,  \, \forall k\in {\mathbb N}; 
\end{equation}
\begin{equation}\label{df.PAKKTcontroleSinalRestDesiguald}
 \lim_{k\to \infty} \frac{\mu_j^k}{\gamma_k} >  0  \quad   \Longrightarrow  \quad    \mu_j^kg_j(p^k) >0,  \, \forall k\in {\mathbb N}.
\end{equation}
\end{enumerate}
\end{definition}
A point $p$ satisfying Definition~\ref{def:pAKKT} is called a PAKKT point; the correspondent sequence $(p^k)_{k\in\mathbb{N}}$ is its associated primal sequence while $(\lambda^k,\mu^k)_{k\in\mathbb{N}}$ is its associated dual sequence. 
In order to present our results, we will make use of the following lemmas extended to the Riemannian setting in \cite{Yamakawa_Sato2022}:
\begin{lemma} \label{le:leAux}
Let $p$ be a local minimizer of problem~\eqref{PNL} and $\alpha>0$. Then,  for each  $k\in {\mathbb N}$ and ${\rho_k}>0$,  the following problem  
\begin{equation*}
\begin{array}{l}   
\displaystyle\Min_{q\in {\cal M}}  f(q) +  \frac{1}{2} d(q,p)^2 + \frac{\rho_k}{2} \left(\left\|h(q)\right\|_2^2 + \left\|g(q)_{+}\right\|_2^2\right),\\
\mbox{subject~to~} d(q,p)\leq \alpha, 
\end{array}
\end{equation*}
admits a solution $p^k$. Moreover, if $\lim_{k\to \infty}\ {\rho_k}=+\infty$ then $\lim_{k\to \infty}\ p^k =  p$.
\end{lemma}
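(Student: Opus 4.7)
The plan is to split the argument into an existence part and a convergence part, both of which follow standard external penalty reasoning once one invokes the Riemannian version of compactness of closed balls. I would implicitly take $\alpha>0$ small enough so that $p$ is a global minimizer of $f$ on $\Omega\cap B_{\alpha}[p]$; this is the content of $p$ being a local minimizer of \eqref{PNL}, and it is the only point where the hypothesis is used.

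For existence, $B_{\alpha}[p]=\{q\in{\cal M}:d(q,p)\leq\alpha\}$ is closed and bounded in the complete metric space $({\cal M},d)$, hence compact by the facts recalled in Section~\ref{sec:aux}. The penalized objective
\begin{equation*}
F_k(q):=f(q)+\tfrac{1}{2}d(q,p)^2+\tfrac{\rho_k}{2}\bigl(\|h(q)\|_2^2+\|g(q)_+\|_2^2\bigr)
\end{equation*}
is continuous on ${\cal M}$ (since $q\mapsto d(q,p)^2$ is continuous and $f,h,g$ are continuously differentiable, and $y\mapsto y_+$ is continuous). Weierstrass's theorem then delivers a minimizer $p^k\in B_{\alpha}[p]$.

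For convergence, since $p$ is feasible and $d(p,p)=0$, we have $F_k(p)=f(p)$, and the optimality of $p^k$ yields
\begin{equation*}
f(p^k)+\tfrac{1}{2}d(p^k,p)^2+\tfrac{\rho_k}{2}\bigl(\|h(p^k)\|_2^2+\|g(p^k)_+\|_2^2\bigr)\leq f(p). \qquad (\star)
\end{equation*}
Since $(p^k)_{k\in{\mathbb N}}\subset B_{\alpha}[p]$ is contained in a compact set, any subsequence admits a further subsequence converging to some $p^{*}\in B_{\alpha}[p]$. Continuity of $f$ bounds $f(p)-f(p^k)$ from above, so rearranging $(\star)$ and sending $\rho_k\to\infty$ forces $\|h(p^k)\|_2^2+\|g(p^k)_+\|_2^2\to 0$, making $p^{*}$ feasible. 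Passing to the limit in $(\star)$ and using $f(p^{*})\geq f(p)$ (from the local minimality of $p$ on $\Omega\cap B_{\alpha}[p]$, which applies since $p^{*}\in\Omega\cap B_{\alpha}[p]$) gives $d(p^{*},p)^2\leq 0$, hence $p^{*}=p$. Since every convergent subsequence has limit $p$ and the whole sequence lies in a compact set, $p^k\to p$.

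The main technical point — really the only one — is the compatibility of $\alpha$ with the local minimality radius, which is the tacit assumption in the lemma. Apart from this, the only Riemannian ingredient is the Hopf-Rinow-style statement that closed and bounded subsets of the complete manifold ${\cal M}$ are compact; everything else is the standard Euclidean external penalty argument, made continuous by the use of the Riemannian distance $d$ in place of the Euclidean norm.
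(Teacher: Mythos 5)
Your argument is correct, and it is the standard external-penalty proof: Weierstrass on the compact ball $B_{\alpha}[p]$ (closed and bounded, hence compact by completeness, as recalled in Section~\ref{sec:aux}) for existence, then the inequality $(\star)$ at the feasible point $p$ to force feasibility of limit points and $d(p^{*},p)=0$. Note that the paper does not prove this lemma at all; it is imported from \cite{Yamakawa_Sato2022}, so there is no in-paper proof to compare against, but your route is the one any such proof must take. Your one substantive observation is also the right one: as literally stated, with $\alpha>0$ arbitrary, only the existence claim holds, and the convergence $p^k\to p$ requires $\alpha$ to be no larger than the radius on which $p$ minimizes $f$ over $\Omega$; this tacit restriction is consistent with how the lemma is invoked in the proof of Theorem~\ref{T:PAKKTCondNec}, where $\alpha$ is chosen small enough that $p$ is the unique global minimizer of $f+\tfrac{1}{2}d(\cdot,p)^2$ on $\Omega\cap B_{\alpha}[p]$ (either that choice or your weaker assumption that $p$ minimizes $f$ on $\Omega\cap B_{\alpha}[p]$ makes the final step $f(p^{*})\geq f(p)\Rightarrow p^{*}=p$ go through).
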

\begin{lemma} \label{le:leAuxs}
Let $\phi\colon {\cal M} \to {\mathbb R}$ be a differentiable function, $\alpha>0$ and $p_0\in {\cal M}$. Suppose that $p\in  {\cal M} $ is an optimal solution of the following optimization problem:
\begin{equation*}
\begin{array}{l}
\displaystyle\Min_{q\in {\cal M}}  \phi (q), \\
\mbox{subject~to~} d(q,p_0)\leq \alpha.
\end{array}
\end{equation*}
If $d(p,p_0)< \alpha$, then $\grad \phi (p) =0$.
\end{lemma}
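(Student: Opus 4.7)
The plan is to show that $p$ is in fact an unconstrained local minimizer of $\phi$ by exploiting the slack $\alpha - d(p,p_0) > 0$ to make free geodesic variations around $p$ that remain feasible. First I would set $\delta_{0} := \alpha - d(p,p_0) > 0$ and choose $\delta := \tfrac{1}{2}\min\{\delta_{0}, r_{p}\} > 0$, where $r_{p}$ is the injectivity radius of ${\cal M}$ at $p$. By the triangle inequality, any $q$ with $d(q,p) < \delta$ satisfies $d(q,p_0) \leq d(q,p) + d(p,p_0) < \delta_{0} + d(p,p_0) = \alpha$, so the open ball $B_{\delta}(p)$ is contained in the feasible set $\{q \in {\cal M}: d(q,p_0) \leq \alpha\}$.

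Next, I would fix an arbitrary $v \in T_{p}{\cal M}$ and consider the smooth curve $\gamma(t) := \exp_{p}(tv)$. Since $\delta < r_{p}$, the exponential map is a diffeomorphism on $B_{\delta}(0_{p})$, and by continuity of $\gamma$ together with $\gamma(0) = p$, there exists $\varepsilon > 0$ such that $\gamma(t) \in B_{\delta}(p)$ for all $|t| < \varepsilon$; thus $\gamma(t)$ is feasible on this interval. Since $p$ is an optimal solution of the constrained problem, $t = 0$ is an \emph{unconstrained} local minimizer of the real-valued function $t \mapsto \phi(\gamma(t))$, so its derivative at $0$ must vanish. By the chain rule and the defining property of the gradient given in the preliminaries, this derivative equals $d\phi(p)\,\gamma'(0) = \langle \grad \phi(p),\, v \rangle$. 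As $v \in T_{p}{\cal M}$ was arbitrary, we conclude $\grad \phi(p) = 0$.

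I do not expect any real obstacle here: the argument is the direct Riemannian analogue of the classical Euclidean fact that an interior local minimizer of a smooth function is a critical point. The only ingredient beyond elementary calculus is the exponential map together with the choice $\delta < r_{p}$, which is precisely what allows us to convert tangent vectors at $p$ into feasible perturbations.
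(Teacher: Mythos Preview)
Your argument is correct and is exactly the standard one: since $p$ lies in the interior of the constraint ball, a small geodesic ball around $p$ is feasible, so $p$ is an unconstrained local minimizer of $\phi$, and the first-order condition follows by differentiating along arbitrary geodesics through $p$. The only superfluous step is invoking that $\exp_p$ is a diffeomorphism on $B_\delta(0_p)$; all you actually use is continuity of $t\mapsto\exp_p(tv)$ at $t=0$, so the bound $\delta<r_p$ is not really needed.

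As for comparison with the paper: the paper does not give its own proof of this lemma. It is stated there as an auxiliary result borrowed from \cite{Yamakawa_Sato2022}, so there is nothing to compare against beyond noting that your proof is the natural direct argument one would expect in that reference as well.
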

We now show that PAKKT is a genuine necessary optimality condition for problem~\eqref{PNL}.
\begin{theorem}\label{T:PAKKTCondNec}
Let $p\in\Omega$ be a local minimizer of \eqref{PNL}. Then, $p$ is a PAKKT point. 
\end{theorem}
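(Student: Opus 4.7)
The plan is to follow the classical external penalty construction adapted to the Riemannian setting, using the two auxiliary lemmas (Lemma~\ref{le:leAux} and Lemma~\ref{le:leAuxs}) which package the hard analytical work. The key observation is that when PAKKT multipliers are produced via quadratic external penalty, the sign-control conditions~\eqref{df.PAKKTcontroleSinalRestIguald}--\eqref{df.PAKKTcontroleSinalRestDesiguald} are automatically satisfied by construction.

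First, I would fix $\alpha>0$ with $\alpha<r_{p}$ so that $q\mapsto\tfrac{1}{2}d(q,p)^2$ is smooth on the closed ball of radius $\alpha$ around $p$, and fix a sequence $\rho_k\to+\infty$. Since $p$ is a local minimizer of~\eqref{PNL}, choosing $\alpha$ small enough we can ensure $p$ is the unique minimizer of $f$ over $\Omega\cap B_\alpha[p]$, and Lemma~\ref{le:leAux} yields a sequence $p^k\to p$ with $p^k$ solving the penalized problem of the lemma. For $k$ sufficiently large we have $d(p^k,p)<\alpha$, so Lemma~\ref{le:leAuxs} gives the stationarity relation
\begin{equation*}
\grad f(p^k)-\exp_{p^k}^{-1}p+\sum_{i=1}^s\rho_k h_i(p^k)\grad h_i(p^k)+\sum_{j=1}^m\rho_k[g_j(p^k)]_+\grad g_j(p^k)=0,
\end{equation*}
where I use that $\grad\tfrac{1}{2}d(\cdot,p)^2|_{p^k}=-\exp_{p^k}^{-1}p$ from the preliminaries.

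Next, I would define the candidate multipliers $\lambda_i^k:=\rho_k h_i(p^k)$ and $\mu_j^k:=\rho_k[g_j(p^k)]_+\in\mathbb{R}_+$. Condition~(i) of Definition~\ref{def:pAKKT} is immediate. For (ii), note that the residual term satisfies $\|\exp_{p^k}^{-1}p\|=d(p^k,p)\to0$, so $\grad L(p^k,\lambda^k,\mu^k)\to0$. For (iii), if $j\notin{\cal A}(p)$ then $g_j(p)<0$ and continuity plus $p^k\to p$ forces $g_j(p^k)<0$ for large $k$, hence $\mu_j^k=0$.

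The sign-control conditions in (iv) come essentially for free from the construction: by definition, $\lambda_i^k h_i(p^k)=\rho_k h_i(p^k)^2\geq 0$ and $\mu_j^k g_j(p^k)=\rho_k[g_j(p^k)]_+g_j(p^k)\geq0$. If $\lim_k|\lambda_i^k|/\gamma_k>0$ then $\lambda_i^k\neq0$ for all large $k$, which means $h_i(p^k)\neq0$ and the product is strictly positive for such $k$; by discarding finitely many initial terms and reindexing we obtain positivity for all $k\in\mathbb{N}$. The same reasoning handles the inequality case: $\mu_j^k>0$ forces $[g_j(p^k)]_+>0$, hence $g_j(p^k)>0$, making the product strictly positive.

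I do not anticipate a serious obstacle: the main subtlety is keeping $\alpha$ within the injectivity radius so that the distance-squared regularizer is smooth and its gradient formula applies, and carefully noting that the sign conditions only require truncating finitely many initial terms. Everything else is a direct adaptation of the Euclidean external penalty argument for AKKT, enriched with the sign information that is automatic for quadratic penalty schemes.
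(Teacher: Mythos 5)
Your proposal is correct and follows essentially the same route as the paper: localize with the proximal term, apply Lemma~\ref{le:leAux} to get penalized minimizers $p^k\to p$, use Lemma~\ref{le:leAuxs} once $d(p^k,p)<\alpha$ to obtain the stationarity identity, define $\lambda^k=\rho_k h(p^k)$, $\mu^k=\rho_k[g(p^k)]_+$, and read off (i)--(iv), with the sign conditions automatic from the quadratic penalty. One small precision: what should be unique is the minimizer of the proximally regularized objective $f(q)+\tfrac{1}{2}d(q,p)^2$ over $\Omega\cap B_\alpha[p]$ (not of $f$ alone, which can fail, e.g.\ for constant $f$), which is exactly how the paper sets up the application of Lemma~\ref{le:leAux}.
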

\begin{proof}
Let $p$ be a local minimizer of problem~\eqref{PNL}. Thus, there is a sufficiently small parameter $\alpha >0$ such that the problem
\begin{equation*}
\begin{array}{l}
\displaystyle\Min_{q\in {\cal M}}f(q)+ \frac{1}{2} d(q,p)^2,\\
\mbox{subject~to~}h(q)=0, ~ g(q)\leq 0,~ d(q,p)\leq \alpha, 
\end{array}
\end{equation*}
has  $p$ as the unique global minimizer. For each $k\in {\mathbb N}$, take ${\rho_k}>0$ such that $\lim_{k\to \infty}\ {\rho_k}=+\infty$. Consider the penalized problem
\begin{equation}\label{PNLaux2}
\begin{array}{l}
\displaystyle\Min_{q\in {\cal M}}  f(q) +  \frac{1}{2} d(q,p)^2 + \frac{\rho_k}{2} \left(\left\|h(q)\right\|_2^2 + \left\|g(q)_{+}\right\|_2^2\right),\\
\mbox{subject~to~} d(q,p)\leq \alpha.
\end{array}
\end{equation}
  It follows from Lemma~\ref{le:leAux}  that  there exists a sequence $(p^k)_{k\in {\mathbb N}}$ such that $p^k$ is a solution of \eqref{PNLaux2} and  $\lim_{k \rightarrow \infty} p^k = p$. Thus, item~$(i)$ of  Definition~\ref{def:pAKKT} is satisfied. Moreover,  there exists  an infinite index set ${K}_1$ such that $d(p^k,p)< \alpha$, for all $k\in {K}_1$. Consequently,  using Lemma~\ref{le:leAuxs},  we conclude that 
\begin{equation*}
\grad f(p^k) -\exp^{-1}_{p^k}{p}+\sum_{i=1}^s\rho_k h_i(p^k)\grad h_i(p^k)+ \sum_{j=1}^m\rho_k \max\{ 0, g_j(p^k)\}\grad g_j(p^k)  = 0,
\end{equation*}
for all $k\in {K}_1$.  Therefore, we have
\begin{align*}
\lim_{k \in {K}_1} \grad L(p^k, \lambda^k , \mu^k) &= \lim_{k \in {K}_1} \Big(\grad f(p^k) +\sum_{i=1}^s\lambda_i^k\grad h_i(p^k)+ \sum_{j=1}^m\mu_j^k\grad g_j(p^k)\Big)\\
                                                                                             &= \lim_{k \in {K}_1} -\exp^{-1}_{p^k}{p} = 0,
\end{align*}
where for each $k \in {K}_1$, we denote $\lambda^k: = \rho_k h(p^k)$ and $\mu^k := \rho_k [g(p^k)]_+ \geq 0$. Therefore, $(ii)$ and $(iii)$  of Definition~\ref{def:pAKKT}   are satisfied.  We will now analyze the validity of~\eqref{df.PAKKTcontroleSinalRestIguald}   and~\eqref{df.PAKKTcontroleSinalRestDesiguald}.   Let $\gamma_k := \left\|(1,\lambda^k,\mu^k)\right\|_{\infty}$ for all $k\in {K}_1$ be such that $\lim_{k\in K_1}M_k=+\infty$, and assume that  $\lim_{k\in {K}_1} ({\left|\lambda_i^k\right|}/{\gamma_k}) >  0$. Thus, ${\left|\lambda_i^k\right|}/{\gamma_k} >  0$ for sufficiently large $k\in {K}_1$, which implies that $h(p^k)\neq0$. Hence, $\lambda_i^kh_i(p^k)=\rho_kh_i(p^k)^2>0$ for all sufficiently large $k\in K_1$.  
Similarly, if $\lim_{k\in {K}_1} ({\mu_j^k}/{\gamma_k}) >  0$, we have $\mu_j^k>0$, which implies $g_j(p^k)>0$ for sufficiently large $k\in K_1$. 
Therefore, \eqref{df.PAKKTcontroleSinalRestIguald}   and \eqref{df.PAKKTcontroleSinalRestDesiguald} are fulfilled.  Consequently,   $p$ satisfies Definition~\ref{def:pAKKT}, which concludes the proof. 
\end{proof}
Let us now introduce QN in the Riemannian context. We will show that under QN the dual sequence $(\lambda^k,\mu^k)_{k\in\mathbb{N}}$ associated with any PAKKT sequence $(p^k)_{k\in\mathbb{N}}$ is bounded. Later, we will show that Algorithm~\ref{Alg:LAA} generates PAKKT sequences, which will provide the main algorithmic relevance of QN.
\begin{definition}\label{def:QN}
Let $\Omega$  be  given by \eqref{eq:constset}, $p\in \Omega$ and  ${\cal A}(p)$ be given by~\eqref{eq:actset}. The point $p$ satisfies the  quasinormality constraint qualification (QN) if there are no $\lambda \in {\mathbb R}^s$ and $\mu \in {\mathbb R}_{+}^{m}$ such that
\begin{enumerate}
	\item[(i)] $\sum_{i=1}^{s} \lambda_i \grad h_i (p) + \sum_{j \in {\cal A}(p)} \mu_j \grad g_j(p)=0$; 
	\item [(ii)] $\mu_j=0$ for all $j \notin {\cal A}(p)$ and $(\lambda, \mu) \neq 0$;
	\item [(iii)] for all $\epsilon >0$ there exists $q \in B_{\epsilon}(p)$ such that $\lambda_i h_i(q)>0$ for all $i \in\left\{1, \ldots , s \right\}$ with $\lambda_i \neq 0$ and $\mu_j g_j(q)>0$ for all $j \in {\cal A}(p)$ with $\mu_j>0$.
\end{enumerate}
\end{definition}

In the next example we show that QN holds, but both RCPLD and CRSC fail. 
\begin{example}\label{ex:QN_Not_RCPLD}
Define the functions $h_1(q):=\varphi_1(q)e^{\varphi_2(q)}$ and $h_2(q):=\varphi_1(q)$ as defined in \eqref{defphi}. Note that 
\begin{equation} \label{eq:igqn}
 \grad h_1(q):=e^{\varphi_2(q)} \grad \varphi_1(q)+ \varphi_1(q)e^{\varphi_2(q)} \grad \varphi_2(q), \qquad \grad h_2(q):=\grad \varphi_1(q).
\end{equation}
The point   $p \in \Omega$ satisfies QN. Indeed,  first we note that  $\varphi_1(p)=0$ and $\varphi_2(p)=0$.  Moreover,  we have $\grad h_1(p)=\grad h_2(p)=\grad \varphi_1(p)$.  Consider  the linear combination $\lambda_1 \grad h_1(p)+ \lambda_2\grad h_2(p)=0$ with $\lambda_1$ and $\lambda_2 \in {\mathbb R}$. Thus, we have 
$
(\lambda_1 + \lambda_2)\grad \varphi_1(p)=0.
$
Since $\grad \varphi_1(p)\neq 0$, we conclude that unless $\lambda_1=\lambda_2=0$, we must have $\lambda_1 \lambda_2<0$. In this case, take $\epsilon >0$ and  $q \in B_{\epsilon}(p)$ such that $q\neq p$.  Since  $h_1(q) h_2(q)>0$, we  conclude that  $\lambda_1 h_1(q)$ and $\lambda_2 h_2(q)$ have opposite signs, which implies that $p$ satisfies QN.

Now, we  are going to show that $p$ does not satisfy RCPLD nor CRSC.  For that, we first  note that  rank of $\{\grad h_1(p), \grad h_2(p) \}$ is equal to one.
On the other hand, similarly to the computations in Example~\ref{ex:CPLDNotCRCQ_MFCQ}, one can prove that, for all $\epsilon >0$, there exists  $q \in B_{\epsilon} (p)$ such that $\{\grad \varphi_1({q}), \grad \varphi_2({q})\}$ is linearly independent with  $q \neq p$.  By the definition of $\varphi_1$, notice that $\varphi_1(q)\neq 0$ for all $q \in B_{\epsilon} (p)$ and sufficiently small $\varepsilon>0$; it follows from \eqref{eq:igqn} that $\{\grad h_1({q}), \grad h_2({q})\}$ is  also linearly independent. Therefore, $p$ does not satisfy RCPLD nor CRSC.
\end{example} 
\begin{theorem}\label{T:PAKKT_QN+Bounded}
Let $p\in\Omega$ be a PAKKT point with associated primal sequence $(p^k)_{k\in {\mathbb N}}$ and dual sequence $(\lambda^k,\mu^k)_{k\in\mathbb{N}}$. Assume that $p$ satisfies QN. Then $(\lambda^k,\mu^k)_{k\in {\mathbb N}}$ is a bounded sequence. In particular, $p$ satisfies the KKT conditions and any limit point of $(\lambda^k,\mu^k)_{k\in\mathbb{N}}$ is a Lagrange multiplier associated with $p$.
\end{theorem}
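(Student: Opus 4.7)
The plan is to argue by contradiction: suppose $(\lambda^k,\mu^k)_{k\in\mathbb{N}}$ is unbounded, so that, along a subsequence, $\gamma_k:=\|(1,\lambda^k,\mu^k)\|_\infty\to+\infty$. Dividing the PAKKT approximate stationarity condition (ii) by $\gamma_k$ and using that $\grad f(p^k)/\gamma_k\to 0$ (by continuity of $\grad f$ together with $\gamma_k\to\infty$), I extract a further subsequence along which $\lambda_i^k/\gamma_k\to\tilde\lambda_i$ and $\mu_j^k/\gamma_k\to\tilde\mu_j\geq 0$ for each $i,j$. Since $\gamma_k=\max\{1,|\lambda_i^k|,\mu_j^k\}$ and $\gamma_k\to\infty$, the $1/\gamma_k$ entry disappears in the limit, forcing $\max_{i,j}\{|\tilde\lambda_i|,\tilde\mu_j\}=1$, so $(\tilde\lambda,\tilde\mu)\neq 0$.

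Passing to the limit — comparing gradients at varying tangent spaces by parallel transport along the short geodesics joining $p^k$ to $p$, exactly as is implicit in the earlier strict-CQ proofs of the excerpt — and invoking continuity of the gradient vector fields $\grad h_i$ and $\grad g_j$, I obtain
\[
\sum_{i=1}^{s}\tilde\lambda_i\grad h_i(p)+\sum_{j=1}^{m}\tilde\mu_j\grad g_j(p)=0.
\]
PAKKT (iii) gives $\tilde\mu_j=0$ for every $j\notin{\cal A}(p)$, so items (i) and (ii) of QN (Definition~\ref{def:QN}) are met by $(\tilde\lambda,\tilde\mu)$.

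The crux — and the reason PAKKT is the right sequential notion to pair with QN — is verifying item (iii) of QN. Fix $\epsilon>0$ and take $k$ large enough that $p^k\in B_\epsilon(p)$. If $\tilde\lambda_i\neq 0$, then $\lim_{k}|\lambda_i^k|/\gamma_k=|\tilde\lambda_i|>0$, so the PAKKT sign rule \eqref{df.PAKKTcontroleSinalRestIguald} yields $\lambda_i^k h_i(p^k)>0$; since $\lambda_i^k$ has the same sign as $\tilde\lambda_i$ for $k$ large, this gives $\tilde\lambda_i h_i(p^k)>0$. The analogous argument via \eqref{df.PAKKTcontroleSinalRestDesiguald} handles $\tilde\mu_j>0$. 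Hence $q:=p^k\in B_\epsilon(p)$ witnesses the sign conditions of QN item (iii) with respect to $(\tilde\lambda,\tilde\mu)$, contradicting QN at $p$. This contradiction proves boundedness of $(\lambda^k,\mu^k)_{k\in\mathbb{N}}$.

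For the final assertion, any limit point $(\bar\lambda,\bar\mu)$ of the bounded dual sequence satisfies $\bar\mu\geq 0$ and $\bar\mu_j=0$ for $j\notin{\cal A}(p)$ (from PAKKT (iii)), and passing to the limit in condition (ii) yields $\grad L(p,\bar\lambda,\bar\mu)=0$. Thus $(\bar\lambda,\bar\mu)$ is a Lagrange multiplier associated with $p$, and the KKT conditions hold at $p$. I anticipate no serious obstacle beyond bookkeeping the signs carefully so that PAKKT (iv) produces \emph{exactly} the sign pattern demanded by QN (iii), together with the routine Riemannian technicality of transporting gradients to a common tangent space.
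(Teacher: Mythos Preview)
Your proposal is correct and follows essentially the same route as the paper: argue by contradiction, normalize by $\gamma_k$, extract a convergent subsequence to obtain a nontrivial $(\tilde\lambda,\tilde\mu)$ satisfying items (i) and (ii) of Definition~\ref{def:QN}, and then use the PAKKT sign rules \eqref{df.PAKKTcontroleSinalRestIguald}--\eqref{df.PAKKTcontroleSinalRestDesiguald} with $q:=p^k$ to produce the witness required by item (iii), contradicting QN. Your explicit closing paragraph deriving KKT and identifying limit points as Lagrange multipliers is a detail the paper leaves implicit in the theorem statement.
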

\begin{proof}
Let $p\in\Omega$ be a PAKKT point with primal sequence $(p^k)_{k\in\mathbb{N}}$ and dual sequence $(\lambda^k,\mu^k)_{k\in\mathbb{N}}$ and let us assume that the dual sequence is unbounded. Then, we will conclude that the point $p$  does not satisfy the quasinormality condition, i.e., we will prove the existence of $\lambda \in {\mathbb R}^s$ and $\mu \in {\mathbb R}_{+}^{m}$ such that items~$(i)$, $(ii)$ and $(iii)$ of  Definition~\ref{def:QN}  are satisfied. For that, set $\gamma_k = \left\|(1,\lambda^k,\mu^k)\right\|_{\infty}$ as in Definition~\ref{def:pAKKT} and take an infinite subsequence indexed by $K_1$ such that $\lim_{k\in K_1}\gamma_k=+\infty$. To simplify the notations let  us  define the following auxiliary sequence 
\begin{equation*}
U^k:=(1,\lambda^k,\mu^k) \in {\mathbb R} \times {\mathbb R}^s \times {\mathbb R}_+^m, \qquad \forall k\in {\mathbb N},
\end{equation*}
with  $\lim_{k \in K_1} \|U^k\|_2 = \infty$. Take an infinite subset $K_2\subset K_1$ such that the sequence $(U^k/\|U^k\|_2)_{k\in {K}_2}$ converges to some $(0,\lambda,\mu)\in\R\times\R^s\times\R^m_+$, with $\|(0,\lambda,\mu)\|=1$. 
Thus, considering that $(p^k)_{k\in {\mathbb N}}$  is a primal PAKKT sequence,  we conclude that 
\begin{equation*}
 \lim_{k \in {K}_2}\frac{ \grad L(p^k, \lambda^k , \mu^k)}{\gamma_k} = \lim_{k \in {K}_2} \Big(\frac{\grad f(p^k)}{\gamma_k} +\sum_{i=1}^s\frac{\lambda_i^k}{\gamma_k}\grad h_i(p^k)+ \sum_{j=1}^m\frac{\mu_j^k}{\gamma_k}\grad g_j(p^k)\Big)= 0.
\end{equation*}
Hence, taking into account that $\mu_j=0$ for $j \notin {\cal A}(p)$, we obtain that item~$(i)$ of  Definition~\ref{def:QN} is satisfied at $p$. In addition , since $(\lambda, \mu)\neq 0$, item~$(ii)$ of Definition~\ref{def:QN} is also satisfied  at $p$. 
From \eqref{df.PAKKTcontroleSinalRestIguald} and \eqref{df.PAKKTcontroleSinalRestDesiguald}, we have that $\lambda_i^kh_i(p^k)>0$ whenever $\lambda_i\neq0$, and $\mu_j^kg_j(p^k)>0$ whenever $\mu_j>0$, which gives precisely item~$(iii)$ of Definition~\ref{def:QN}. Therefore, QN fails.
\end{proof}





Finally, it remains to show that Algorithm~\ref{Alg:LAA} generates PAKKT sequences, which gives its global convergence result under QN.

\begin{theorem}
Assume Algorithm~\ref{Alg:LAA} generates an infinite sequence $(p^k)_{k\in {\mathbb N}}$ with a feasible accumulation $p$, say, $\lim_{k\in K}p^k=p$. Then, $p$ is a PAKKT point with correspondent primal sequence $(p^k)_{k\in K}$ and dual sequence $(\lambda^k,\mu^k)_{k\in K}$ as generated by Algorithm~\ref{Alg:LAA}. In particular, $p$ is a KKT point and any limit point of $(\lambda^k,\mu^k)_{k\in K}$ is a Lagrange multiplier associated with $p$.
\end{theorem}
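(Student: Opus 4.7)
The plan is to verify the four conditions (i)--(iv) of Definition~\ref{def:pAKKT} for the primal-dual pair $(p^k,\lambda^k,\mu^k)_{k\in K}$ generated by Algorithm~\ref{Alg:LAA}. Condition (i) is exactly the hypothesis. For (ii), the key observation is that by construction of the multiplier update in Step~2, a direct computation of the gradient of the augmented Lagrangian gives
\[
\grad \mathcal{L}_{\rho_k}(p^k,\bar{\lambda}^k,\bar{\mu}^k) = \grad f(p^k) + \sum_{i=1}^s \lambda_i^k \grad h_i(p^k) + \sum_{j=1}^m \mu_j^k \grad g_j(p^k) = \grad L(p^k,\lambda^k,\mu^k),
\]
so the subproblem stopping criterion \eqref{step1 alg1} together with $\epsilon_k \to 0$ immediately yields (ii).

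Condition (iii) requires a case analysis based on the behavior of $\rho_k$. Fix $j \notin \mathcal{A}(p)$, so that $g_j(p) < 0$ and by continuity $g_j(p^k) \leq g_j(p)/2 < 0$ for large $k \in K$. If $\rho_k \to \infty$, then the safeguard $\bar{\mu}_j^k \leq \mu_{\max}$ yields $\bar{\mu}_j^k + \rho_k g_j(p^k) \to -\infty$, and hence $\mu_j^k = [\bar{\mu}_j^k + \rho_k g_j(p^k)]_+ = 0$ for large $k$. If instead $\rho_k$ is bounded, then it is eventually constant, say equal to $\rho^* \geq \rho_1 > 0$, and the update rule forces \eqref{eq: step3-2 alg} to hold at every subsequent iteration, which gives $\|V^k\|_2 \to 0$ and hence $\mu^k - \bar{\mu}^k \to 0$. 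Arguing by contradiction: if $\mu_j^k > 0$ along an infinite subset of $K$, then $\mu_j^k = \bar{\mu}_j^k + \rho^* g_j(p^k)$; extracting a further convergent subsequence of $\bar{\mu}_j^k \in [0,\mu_{\max}]$ we would obtain $0 = \rho^* g_j(p)$, contradicting $g_j(p)<0$ and $\rho^* > 0$.

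For (iv), the pivotal observation is that $\gamma_k \to \infty$ forces $\rho_k \to \infty$: if $\rho_k$ remained bounded, then the criterion in Step~3 would force $\|h(p^k)\|_2 \to 0$ and $\|V^k\|_2 \to 0$, making both $\lambda^k = \bar{\lambda}^k + \rho_k h(p^k)$ and $\mu^k = \bar{\mu}^k + (\mu^k-\bar{\mu}^k)$ bounded, a contradiction. With $\rho_k \to \infty$ in hand, the sign-control implications follow from the explicit form of the updates. If $|\lambda_i^k|/\gamma_k$ admits a positive limit, then $|\lambda_i^k| \to \infty$ while $\bar{\lambda}_i^k \in [\lambda_{\min},\lambda_{\max}]$, so $|\rho_k h_i(p^k)|$ dominates and $\lambda_i^k = \bar{\lambda}_i^k + \rho_k h_i(p^k)$ shares the sign of $\rho_k h_i(p^k)$, equivalently of $h_i(p^k)$, for large $k$; hence $\lambda_i^k h_i(p^k) > 0$. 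Analogously, if $\mu_j^k/\gamma_k$ admits a positive limit, then $\mu_j^k \to \infty$, which is possible only if $\mu_j^k = \bar{\mu}_j^k + \rho_k g_j(p^k)$ with $\rho_k g_j(p^k) \to +\infty$; since $\rho_k > 0$, this forces $g_j(p^k) > 0$ and thus $\mu_j^k g_j(p^k) > 0$ for large $k$. The ``in particular'' clause then follows by combining the established PAKKT property with Theorem~\ref{T:PAKKT_QN+Bounded} under the quasinormality assumption.

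I expect the most delicate part to be the bounded-$\rho_k$ branch of (iii), which is the only step that depends essentially on the penalty update rule \eqref{eq: step3-2 alg} rather than on direct algebraic manipulation of the augmented Lagrangian, and requires a careful subsequence argument together with the lower bound $\rho_k \geq \rho_1 > 0$ to rule out persistence of $\mu_j^k > 0$.
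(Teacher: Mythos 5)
Your proof is correct and follows essentially the same route as the paper's: it verifies (i)--(iv) of Definition~\ref{def:pAKKT} directly from Steps 1--3 of Algorithm~\ref{Alg:LAA}, using the identity $\grad\mathcal{L}_{\rho_k}(p^k,\bar\lambda^k,\bar\mu^k)=\grad L(p^k,\lambda^k,\mu^k)$ for (ii), the bounded/unbounded $\rho_k$ dichotomy with $V^k\to0$ for (iii), and the boundedness of the safeguarded multipliers to force $\rho_k\to+\infty$ and the sign conditions in (iv) (the paper deduces $\rho_k\to+\infty$ inside the sign argument rather than up front, a negligible difference). The paper's proof stops at the PAKKT property and leaves the ``in particular'' clause implicit; your reading of it through Theorem~\ref{T:PAKKT_QN+Bounded}, i.e.\ under quasinormality, is the intended interpretation.
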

\begin{proof} 
By {\bf Step 1} and {\bf Step 2} of the algorithm, we have $$\lim_{k\in K}\grad L(p^k,\lambda^k,\mu^k)=\lim_{k\in K}\grad{\cal L}_{\rho_k}(p^k,\bar{\lambda}^k,\bar{\mu}^k)=0,$$
with $\mu_j^k=0$ for sufficiently large $k\in K$ if $j\not\in{\cal A}(p)$. To see this, note that when $(\rho_k)_{k\in K}$ is unbounded, this follows from the definition of $\mu_j^k$, the boundedness of $(\bar{\mu}_j^k)_{k\in K}$, and the fact that $\rho_kg_j(x^k)\to-\infty$. When $(\rho_k)_{k\in K}$ is bounded, we must have from {\bf Step 3} that $V^k\to0$. In particular, $\max\left\{0,\frac{\bar{\mu}^k_j}{\rho_k}+g_j(p^k)\right\}-\frac{\bar{\mu}_j^k}{\rho_k}\to0$. Since $g_j(p^k)<0$ is bounded away from zero when $j\not\in{\cal A}(p)$, we must have $\mu_j^k=0$ for sufficiently large $k\in K$. Thus $(i)$, $(ii)$, and $(iii)$ of Definition~\ref{def:pAKKT} hold.

To prove $(iv)$ of Definition \ref{def:pAKKT}, let $\gamma_k := \left\|(1,\lambda^k,\mu^k)\right\|_{\infty}$ and assume that $\lim_{k\in K}\gamma_k=+\infty$. Let $i\in\{1,\dots,s\}$ be such that $\lim_{k\in K}\frac{\lambda_i^k}{\gamma_k}=\lambda_i\neq0$ and $j\in{\cal A}(p)$ be such that $\lim_{k\in K}\frac{\mu_j^k}{\gamma_k}=\mu_j>0$. This implies that $\lambda_i^k=\bar{\lambda}_i^k+\rho_kh_i(p^k)$ is unbounded for $k\in K$, with $\lambda_i^k\lambda_i>0$. Since $(\bar{\lambda}_i^k)_{k\in K}$ is bounded, the only possibility is that $\rho_k\to+\infty$ and $\lambda_ih_i(p^k)>0$ for sufficiently large $k\in K$. Similarly, we have $\mu_jg_j(p^k)>0$ for sufficiently large $k\in K$, which completes the proof.
\end{proof}

We conclude by providing another property of QN in connection with Algorithm~\ref{Alg:LAA}. Instead of considering \eqref{step1 alg1} in {\bf Step 1} of Algorithm~\ref{Alg:LAA}, one may consider a more flexible criterion for solving the correspondent subproblem. That is, instead of requiring the iterate $p^k$ to satisfy $\left\| \grad {\mathcal L}_{\rho_k}(p^k, \bar{\lambda}^k, \bar{\mu}^k)\right\| \leq\epsilon_k$, one may require the looser criterion $\left\| \frac{\grad {\mathcal L}_{\rho_k}(p^k, \bar{\lambda}^k, \bar{\mu}^k)}{\gamma_k}\right\| \leq\epsilon_k$, where $\gamma_k := \left\|(1,{\lambda}^k,{\mu}^k)\right\|_{\infty}$ with $\lambda^k$ and $\mu^k$ given as in {\bf Step 2} of the algorithm. That is, one abdicates robustness of the solution of the subproblem in place of an easier computable iterate. For instance, this is the approach considered in the well known interior point method IPOPT \cite{ipopt}, even though it tends to generate unbounded dual sequences \cite{ye}. This modification gives rise to the so-called Scaled-PAKKT condition \cite{Andreani_Haeser_Schuverdt_Secchin_Silva2022} which we present in the Riemannian setting as follows:


\begin{definition}\label{def:Scaled_PAKKT}
The  Scaled-PAKKT condition  for problem~\eqref{PNL} is satisfied at a  point $p\in \Omega$ if there exist   sequences $(p^k)_{k\in {\mathbb N}}\subset {\cal M}$, $(\lambda^k)_{k\in {\mathbb N}}\subset {\mathbb R}^s$ and $(\mu^k)_{k\in {\mathbb N}}\subset {\mathbb R}_+^m$  such that  it holds: 
\begin{enumerate}
	\item[(i)] $\lim_{k\to \infty}\ p^k =  p$;
  \item[(ii)] $\lim_{k\to \infty} \left\| \frac{\grad L(p^k, \lambda^k, \mu^k)}{\gamma_k}\right\| =  0$, where  $\gamma_k := \left\|(1,\lambda^k,\mu^k)\right\|_{\infty}$;
	\item[(iii)] $\mu_j^k=0$ for all $j\not\in{\cal A}(p)$ and sufficiently large $k$;
	\item[(iv)] If  $\gamma_k\to+\infty$, then 
\begin{equation*} 
 \lim_{k\to \infty} \frac{\left|\lambda_i^k\right|}{\gamma_k} >  0 \quad   \Longrightarrow  \quad   \lambda_i^kh_i(p^k) >0,  \, \forall k\in {\mathbb N}; 
\end{equation*}
\begin{equation*}
 \lim_{k\to \infty} \frac{\mu_j^k}{\gamma_k} >  0  \quad   \Longrightarrow  \quad    \mu_j^kg_j(p^k) >0,  \, \forall k\in {\mathbb N}.
\end{equation*}
\end{enumerate}
\end{definition}

It is easy to see that Algorithm~\ref{Alg:LAA} with the looser criterion in {\bf Step 1} as described previously generates Scaled-PAKKT sequences. Now, it is easy to see that QN is still sufficient for guaranteeing boundedness of the dual Scaled-PAKKT sequence, following the proof of Theorem \ref{T:PAKKT_QN+Bounded}; however, let us show that QN is somewhat the weakest condition with this property.

\begin{theorem}
If for each continuously differentiable function $f\colon {\cal M} \rightarrow {\mathbb R}$ such that $p\in \Omega$ is a Scaled-PAKKT point, the KKT conditions also hold, then $p$ satisfies QN or ${\cal L} (p)^{\circ } = T_p{\cal M}$.
\end{theorem}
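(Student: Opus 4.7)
I will argue by contrapositive: assuming that QN fails at $p$ and that ${\cal L}(p)^\circ\neq T_p{\cal M}$, I shall exhibit a continuously differentiable objective $f\colon{\cal M}\to\R$ for which $p$ is a Scaled-PAKKT point but not a KKT point. The construction has two ingredients. From the QN failure I pull a nontrivial multiplier direction $(\lambda,\mu)$ and sign-compatible nearby points which, after scaling by some $\eta_k\to+\infty$, will force $\gamma_k\to+\infty$ with the rescaled Lagrangian residual $\grad L/\gamma_k$ still vanishing. From ${\cal L}(p)^\circ\subsetneq T_p{\cal M}$ I choose $\grad f(p)$ outside $-{\cal L}(p)^\circ$, which blocks the KKT condition regardless of the multipliers.

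\textbf{Construction.} From the failure of QN I fix $(\lambda,\mu)\in\R^s\times\R_+^m$ with $(\lambda,\mu)\neq 0$, $\mu_j=0$ for $j\notin{\cal A}(p)$, $\sum_{i=1}^s\lambda_i\grad h_i(p)+\sum_{j\in{\cal A}(p)}\mu_j\grad g_j(p)=0$, and a sequence $q^k\to p$ such that $\lambda_ih_i(q^k)>0$ whenever $\lambda_i\neq 0$ and $\mu_jg_j(q^k)>0$ whenever $\mu_j>0$. Setting $\varepsilon_k:=\sum_i\lambda_i\grad h_i(q^k)+\sum_j\mu_j\grad g_j(q^k)$, continuity gives $\|\varepsilon_k\|\to 0$; passing to a subsequence I choose scalars $\eta_k\to+\infty$ with $\eta_k\|\varepsilon_k\|\to 0$ (for instance $\eta_k:=\|\varepsilon_k\|^{-1/2}$ when $\varepsilon_k\neq 0$ and $\eta_k:=k$ otherwise), and define $(p^k,\lambda^k,\mu^k):=(q^k,\eta_k\lambda,\eta_k\mu)$. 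On the objective side, I pick $w\in T_p{\cal M}\setminus{\cal L}(p)^\circ$ — necessarily nonzero since $0\in{\cal L}(p)^\circ$ — and define $f(q):=-\langle w,\exp_p^{-1}(q)\rangle$ on a strongly convex neighborhood of $p$, extended smoothly to ${\cal M}$ by a bump function. Since $d(\exp_p^{-1})_p=\mathrm{id}$, a direct computation gives $\grad f(p)=-w$, so $-\grad f(p)=w\notin{\cal L}(p)^\circ$, and hence $p$ is not a KKT point for this $f$.

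\textbf{Verification and main obstacle.} Items (i) and (iii) of Definition~\ref{def:Scaled_PAKKT} are immediate by construction. For large $k$ one has $\gamma_k=\|(1,\lambda^k,\mu^k)\|_\infty=\eta_k\|(\lambda,\mu)\|_\infty\to+\infty$, and
\[\frac{\grad L(p^k,\lambda^k,\mu^k)}{\gamma_k}=\frac{\grad f(p^k)}{\eta_k\|(\lambda,\mu)\|_\infty}+\frac{\varepsilon_k}{\|(\lambda,\mu)\|_\infty}\longrightarrow 0,\]
giving (ii). For (iv), if $\lim_k|\lambda_i^k|/\gamma_k=|\lambda_i|/\|(\lambda,\mu)\|_\infty>0$ then necessarily $\lambda_i\neq 0$, so $\lambda_i^kh_i(p^k)=\eta_k\lambda_ih_i(p^k)>0$ by the sign property of the QN-failure sequence $q^k$; the argument for $\mu_j^kg_j(p^k)$ is identical. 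The only delicate point in this plan is arranging $\eta_k\to+\infty$ and $\eta_k\|\varepsilon_k\|\to 0$ simultaneously, which is handled by the subsequence/scaling choice above; the global smooth extension of $f$ away from $p$ is a routine cutoff construction on the complete manifold ${\cal M}$.
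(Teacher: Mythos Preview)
Your proof is correct and follows essentially the same contrapositive construction as the paper: pick $w\notin{\cal L}(p)^\circ$, set $\grad f(p)=-w$ via the exponential chart (you are slightly more careful than the paper in extending $f$ globally by a bump function), and use the QN-failure data $(\lambda,\mu,q^k)$ scaled by a factor going to infinity to build the Scaled-PAKKT sequence.

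One remark: the ``only delicate point'' you flag --- arranging $\eta_k\to+\infty$ together with $\eta_k\|\varepsilon_k\|\to 0$ --- is in fact not needed, and your own verification does not use it. Your displayed formula
\[
\frac{\grad L(p^k,\lambda^k,\mu^k)}{\gamma_k}=\frac{\grad f(p^k)}{\eta_k\|(\lambda,\mu)\|_\infty}+\frac{\varepsilon_k}{\|(\lambda,\mu)\|_\infty}
\]
tends to zero as soon as $\eta_k\to+\infty$ and $\varepsilon_k\to 0$, with no control on $\eta_k\|\varepsilon_k\|$ required; this is exactly why the paper can simply take $\eta_k=k$. The careful choice $\eta_k=\|\varepsilon_k\|^{-1/2}$ would only be relevant if one were aiming for the unscaled condition $\grad L(p^k,\lambda^k,\mu^k)\to 0$, which is not what Scaled-PAKKT demands.
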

\begin{proof}
Assume that the point $p$ does not satisfy  the  quasinormality condition and ${\cal L} (p)^{\circ} \neq T_p{\cal M}$. We will show that there exists a continuously differentiable function $f\colon {\cal M} \rightarrow {\mathbb R}$ such that  $p$ is a Scaled-PAKKT point, but $p$ is not a KKT point. Since ${\cal L} (p)^{\circ} \neq T_p{\cal M}$, taking into account that ${\cal L} (p)^{\circ} \subset T_p{\cal M}$ and $0 \in {\cal L} (p)^{\circ}$,  we concluded that there exists $v \in T_p{\cal M}$ with $v\neq 0$ such that $v \notin {\cal L} (p)^{\circ}$. Thus,  by  the definition of ${\cal L} (p)^{\circ}$ in \eqref{eq:PolarConeLin},  we have 
\begin{equation}\label{eq1:T:ScaledPAKKT} 
-v +\sum_{i=1}^{s} \bar{\lambda}_i \grad h_i (p) + \sum_{j \in {\cal A}(p)} \bar{\mu}_j \grad g_j(p) \neq 0, \quad \forall \bar{\mu}_j \geq 0, \, \forall \bar{\lambda}_i \in {\mathbb R}.
\end{equation}
To proceed we  take  $0<\delta<r_{p}$, the injectivity radius, and  define  $f\colon B_{\delta}({p}) \rightarrow {\mathbb R}$ by $f(q):=\langle -v, - \exp^{-1}_{p}{q}\rangle$, which is continuously  differentiable and    $\grad f(p)=-v$.  Hence, \eqref{eq1:T:ScaledPAKKT}  implies   that $p$  is not a KKT point. It remains to show that  $p$ is a Scaled-PAKKT point. Since $p$ does not satisfy QN, there exist $\lambda \in {\mathbb R}^s$ and $\mu \in {\mathbb R}_{+}^{m}$  that satisfy items~$(i)$, $(ii)$, and $(iii)$ of Definition~\ref{def:QN}. 
In particular, by item $(iii)$, let $(p^k)_{k \in {\mathbb N}} \subset {\cal M}$ be such that, $\lim_{k \rightarrow \infty} p^k = p$, with
\begin{equation}\label{eq2:T:ScaledPAKKT} 
\lambda_i h_i(p^k)>0, \quad \forall i \in\left\{1, \ldots , s \right\}, \,\, \lambda_i \neq 0 \quad and \quad \mu_j g_j(p^k)>0, \quad \forall j \in {\cal A}(p), \,\, \mu_j>0.
\end{equation}
By the  continuity of $\grad h$ and $\grad g$ and item~$(i)$ of Definiton~\ref{def:QN}, we have
\begin{equation}\label{eq3:T:ScaledPAKKT} 
\lim_{k \rightarrow \infty} \Big(\sum_{i=1}^{s} \lambda_i \grad h_i (p^k) + \sum_{j \in {\cal A}(p)} \mu_j \grad g_j(p^k)\Big)=0.
\end{equation}
It follows from~\eqref{eq3:T:ScaledPAKKT} and from the continuity of $\grad f$, provided that $\lim_{k \rightarrow \infty} p^k = p$, that 
\begin{equation}\label{eq4:T:ScaledPAKKT} 
\lim_{k \rightarrow \infty} \frac{1}{k}\Big(\grad f (p^k) + \sum_{i=1}^{s} k\lambda_i \grad h_i (p^k) + \sum_{j \in {\cal A}(p)} k\mu_j \grad g_j(p^k)\Big)=0.
\end{equation}
Taking into account that any positive multiple of $(\lambda, \mu)$ also satisfies the three items of Definition~\ref{def:QN}, we can suppose without loss of generality that $\left\|(\lambda, \mu)\right\|_\infty =1$. Thus, setting    $\lambda^k:= k \lambda$, $\mu^k := k \mu$, and  $\gamma_k := \left\|(1,\lambda^k,\mu^k)\right\|_{\infty}$  we have $\gamma_k = k$. Hence, \eqref{eq4:T:ScaledPAKKT}  becomes
\begin{equation}\label{eq5:T:ScaledPAKKT2} 
\lim_{k \rightarrow \infty} \frac{1}{\gamma_k}\Big(\grad f (p^k) + \sum_{i=1}^{s} \lambda^k_i \grad h_i (p^k) + \sum_{j \in {\cal A}(p)} \mu^k_j \grad g_j(p^k)\Big)=0.
\end{equation}
We conclude that $p$ is a scaled PAKKT point and the proof is complete.
\end{proof}

\section{Conclusions}

In this paper we presented a detailed global convergence analysis of a safeguarded augmented Lagrangian method defined on a complete Riemannian manifold. In order to do this, we presented several weak constraint qualifications that can be used to obtain stationarity of all limit points of a primal sequence generated by the algorithm, despite the fact that the dual sequence may be unbounded. In doing so, we described several properties of these conditions well known in the Euclidean setting, which should foster further developments in the Riemannian setting. By means of a stronger sequential optimality condition, we were able to present a weak constraint qualification which guarantees boundedness of the dual sequence, even when the true set of Lagrange multipliers is unbounded. In presenting our conditions, we provided illustrative examples to prove that our conditions are strictly weaker than previously known ones in {\it any} complete Riemannian manifold with dimension $n\geq2$.

Note that when defining the sequential optimality conditions, we chose to present the simplest complementarity measure, namely, item ii) of Theorem \ref{def:AKKT} and item iii) of Definition \ref{def:pAKKT}, while in \cite{Yamakawa_Sato2022}, they considered a slightly stronger complementarity measure known as Approximate Gradient Projection. See the recent discussion about several different complementarity measures in \cite{mor} in the context of Euclidean conic optimization. We foresee significant progress in this topic in the nearby future, in particular, several stronger first- and second-order global convergence results of augmented Lagrangian methods and other algorithms should be expected to be extended to the Riemannian setting.
 
Finally, in the particular case where the manifold ${\cal M}$ can be embedded in an Euclidean space, one can treat $x\in{\cal M}$ as a subproblem/lower level constraint as described in \cite{lowerlevel}. It is clear that one should exploit the Riemannian structure in order to solve the subproblems more efficiently, while it is also clear that an intrinsic formulation of the theory is well justified \cite{BergmannHerzog2019}; however, in this context, it is not clear whether the pure Euclidean theory differs from the one formulated in the Riemannian setting. This topic will be the subject of a forthcoming paper.


\bibliographystyle{habbrv}
\bibliography{SeqOtimCondRiemannian}

\end{document}